\newtheorem{theorem}{Theorem}[section]
\newtheorem{lemma}[theorem]{Lemma}
\newtheorem{corollary}[theorem]{Corollary}
\theoremstyle{definition}
\newtheorem{definition}[theorem]{Definition}
\newtheorem{example}[theorem]{Example}
\newtheorem{conjecture}[theorem]{Conjecture}
\newtheorem{problem}[theorem]{Problem}
\theoremstyle{remark}
\newtheorem{remark}[theorem]{Remark}
\numberwithin{equation}{section}
\begin{document}

\setcounter{page}{1}

\title[FPT]{ Chatterjea type fixed point in Partial $b$-metric spaces}

\author[Ya\'e Ulrich Gaba]{Ya\'e Ulrich Gaba$^{1,3,\dagger}$}

\author{Collins Amburo Agyingi$^{2,3}$}

\author[Domini Jocema Leko]{Domini Jocema Leko$^{3}$}

\address{$^{1}$Institut de Math\'ematiques et de Sciences Physiques (IMSP)/UAC, 01 BP 613
	Porto-Novo, B\'enin.}

	\address{$^{2}$ Department of Mathematics and Applied Mathematics, Nelson Mandela University,
	P.O. Box 77000, Port Elizabeth 6031, South Africa.}

\address{$^{3}$African Center for Advanced Studies, P.O. Box 4477, Yaounde, Cameroon.}

\address{$^{\dagger}$Corresponding author.}

\vspace{0.3cm}

\email{\textcolor[rgb]{0.00,0.00,0.84}{yaeulrich.gaba@gmail.com
}}

\email{\textcolor[rgb]{0.00,0.00,0.84}{collins.agyingi@mandela.ac.za
}}

\email{\textcolor[rgb]{0.00,0.00,0.84}{domini.leko@gmail.com
}}

\subjclass[2010]{Primary 47H05; Secondary 47H09, 47H10.}

\keywords{partial
	$b$-metric; fixed point}

\begin{abstract}
	In this paper, we give and prove two Chatterjea type
	fixed point theorems on partial $b$-metric space. We propose an extension to the Banach contaction principle on partial $b$-metric space which was already presented by Shukla and also study some related
	results on the completion of a partial metric type space. In particular, we prove a joint Chatterjea-Kannan fixed point theorem. We verify the $T$-stability of
	Picard's iteration and conjecture the $P$ property for such maps. We also give
	examples to illustrate our results.

\end{abstract}

\maketitle

\section{Introduction and Preliminaries}

In literature, one finds numerous generalizations of metric spaces and Banach contraction principle (BCP). In this line, Czerwik \cite{cze} proposed $b$-metric
spaces as a generalization of metric spaces and proved the famous BCP in such spaces. In this sequel, Gaba \cite{gaba} introduced the so-called ``metric type space" and proved a common fixed point theorem with the help what he called $\lambda$-sequence in that setting. 
 
After Matthews \cite{mat} introduced partial metric spaces as a generalization of the
metric space, many authors have studied fixed point theorems on theses spaces (e.g. \cite{abdel,oltra}), in particular, Shukla\cite{shu}
gave some analog of the Banach contraction principle as well as the Kannan type fixed point theorem
in partial $b$-metric spaces. 

 In this paper, analogs of the Chatterjea fixed point theorem are proved. 
 
First, we recall some definitions from partial $b$-metric spaces.

\begin{definition} (Compare \cite{mat})
	A partial metric type on a set $X$ is a function $p: X \times X \to [0, \infty)$ such that:
	\begin{enumerate}
		\item[(pm1)] $x = y$ iff $(p(x, x) = p(x, y)=p(y,y)$ whenever $x, y \in X$,
		\item[(pm2)] $0\leq p(x, x)\leq p(x, y)$ whenever $x, y \in X$,
		
		\item[(pm3)] $p(x, y) = p(y, x);$ whenever $x, y \in X$,
		
		\item[(pm4)] There exists a real number $s\geq 1$ such that
		$$p(x, y) + p(z,z)\leq  s[p(x,z)+p(z,y)]$$ 
		for any points $x,y,z\in X$. 
		
	\end{enumerate}
	
The pair $(X, p)$ is called a partial metric type space or a partial $b$-metric space.
	\end{definition}

It is clear that, if $p ( x , y ) = 0$ , then, from (pm1) and (pm2), $x = y$.

The family $\mathcal B'$ of sets
\begin{equation}\label{def.basis2-pm}
B'_p(x,\varepsilon):=\{y\in X : p(x,y)<\varepsilon +p(x,x)\},\; x\in X,\, \varepsilon>0\,,\end{equation}
is a basis for a topology $\tau(p)$ on $X$. The topology $\tau(p)$ is $T_0$.

\begin{definition}
	Let $(X, p)$ be a partial $b$-metric space. Let $(x_n)_{n\geq 1}$ be any sequence in $X$ and $x \in X$. Then:
	
	\begin{enumerate}
		\item The sequence $(x_n)_{n\geq 1}$ is said to be convergent with respect to $\tau(p)$ (or $\tau(p)$-convergent) and converges to $x$, if $\lim\limits_{n\to \infty} p(x , x_n) = p(x, x)$.
		We write $$x_n \overset{p}{\longrightarrow} x.$$

		\item The sequence $(x_n)_{n\geq 1}$ is said to be a $p$-Cauchy sequence if
		$$\lim\limits_{n\to \infty,m\to \infty} p(x_n , x_m)$$ exists and is finite.

	\end{enumerate}

	\ $(X, p)$ is said to be complete if for every $p$-Cauchy sequence $(x_n)_{n\geq 1} \subseteq X$, there exists $x \in X$ such that:
	$$ \lim\limits_{n\to \infty,m\to \infty} p(x_n , x_m)= \lim\limits_{n\to \infty} p(x , x_n)=p(x,x).$$
	
\end{definition}

We give these additional definitions, useful to characterize some specific complete partial metric type spaces.
\begin{definition}
	Let $(X, p)$ be a partial $b$-metric space.
	\item The sequence $(x_n)_{n\geq 1}\subset X$ is called $0$-Cauchy if
	$$\lim\limits_{n,m\to \infty} p(x_n , x_m)=0.$$ 
	
	\item $(X, p)$ is called $0$-complete  if for every $0$-Cauchy
	sequence $(x_n)_{n\geq 1} \subseteq X$, there exists $x \in X$ such that:
	
	$$ \lim\limits_{n,m\to \infty} p(x_n , x_m)= \lim\limits_{n\to \infty} p(x_n , x)=p(x,x)=0.$$
\end{definition}

\section{BCP extension}

In this section, we show that if $T$ is a self-map on a partial metric space type space $(X,p)$ and has a power which is a contraction, i.e. there exists $n\in \mathbb{N}, n>1$ and $0\leq \lambda<1$ such that 

\[p(T^nx,T^ny) \leq \lambda p(x,y) ,\]

then there is a transformation $p'=\phi(p)$ of $p$ such that $T$ a contraction on $(X,p'))$. Moreover, we prove that the partial metric type space $(X,p')$ is $0$-complete if $T$ is uniformly continuous.

Ideas for this section are merely copies of the results presented in \cite{gabs}. We adjust them in the partial metric type setting.
We begin with the following definitions.

\begin{definition} 
	Two partial metrics type $p_1$ and $p_2$ on a set $X$ are said to be equivalent if there exist $\alpha,\beta\geq 0$  such that
	$$\alpha p_1(x,y) \leq p_2(x,y) \leq \beta p_1(x,y)  , \ \text{for all } x,y,z \in X.$$
	
\end{definition}

\begin{definition}
	Given two partial metric type spaces $(X,p_1)$ and $(Y,p_2)$ , we say that $T : (X,p_1) \to (Y,p_2)$ is uniformly continuous if for every real number $\varepsilon > 0$ and $\lambda>0$ there exists $\delta=\delta(\lambda) > 0$ such that for every $x, y \in X$ with $p_1(x, y) < \delta$, we have that $p_2(Tx, Ty) < \varepsilon$.
	
\end{definition}

\begin{theorem}\label{BCP}(\cite[Theorem 1.]{shu})
	
	Let $(X,p)$ be a complete partial $b$-metric space with coefficient
	$s \geq 1$ and let $T:X\to X$
	be a mapping such that there exists $\lambda\in [0,1)$ satisfying
	
	\begin{equation}\label{BCPeq}
	p(Tx,Ty) \leq \lambda \ p(x,y),
	\end{equation}

	whenever $x,y\in X.$ 
	Then $T$ has a unique fixed point.
\end{theorem}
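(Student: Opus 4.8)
\emph{Proof idea.} The natural route is the Picard iteration argument, carried out with an eye on the two ways a partial $b$-metric departs from an ordinary metric: the self-distances $p(x,x)$ need not vanish, and the triangle inequality (pm4) carries the coefficient $s$. Fix any $x_0\in X$ and put $x_n=T^nx_0=Tx_{n-1}$. Iterating \eqref{BCPeq} gives both $p(x_n,x_{n+1})\le\lambda^n p(x_0,x_1)$ and $p(x_n,x_n)\le\lambda^n p(x_0,x_0)$, so, using (pm2), one has $p(x_n,x_n)\to 0$ and $p(x_n,x_{n+1})\to 0$. If the iteration stabilizes (some $x_{n+1}=x_n$) the conclusion is immediate, so assume it does not.

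The crux is to show that $(x_n)$ is $p$-Cauchy. Discarding the nonnegative term $p(z,z)$ in (pm4) yields the $b$-triangle inequality $p(x,y)\le s\bigl(p(x,z)+p(z,y)\bigr)$; telescoping it along $x_n,x_{n+1},\dots,x_m$ and substituting the geometric bound on consecutive distances gives, for $m>n$,
\[
p(x_n,x_m)\;\le\;\sum_{k=n}^{m-2}s^{\,k-n+1}\lambda^{k}\,p(x_0,x_1)\;+\;s^{\,m-n-1}\lambda^{\,m-1}\,p(x_0,x_1).
\]
One must then argue that the right-hand side tends to $0$ as $n,m\to\infty$; this is the one genuinely delicate point, since the powers of $s$ grow along the chain, and it is precisely here that the $b$-metric setting is harder than the classical one, the control coming from the interplay of $s$ with the contraction constant. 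Granting this, $\lim_{n,m\to\infty}p(x_n,x_m)=0$, so $(x_n)$ is in fact $0$-Cauchy, hence $p$-Cauchy, and completeness of $(X,p)$ furnishes a point $z\in X$ with $\lim_n p(x_n,z)=p(z,z)=\lim_{n,m\to\infty}p(x_n,x_m)=0$.

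It remains to identify $z$ as the fixed point. Applying (pm4) with the points $Tz$, $z$ and the intermediate point $x_{n+1}=Tx_n$, and then \eqref{BCPeq}, gives $p(Tz,z)\le s\bigl(p(Tz,Tx_n)+p(x_{n+1},z)\bigr)\le s\bigl(\lambda p(z,x_n)+p(x_{n+1},z)\bigr)$, and both terms on the right tend to $0$; hence $p(Tz,z)=0$, so $Tz=z$ (recall that $p(a,b)=0$ forces $a=b$ by (pm1)--(pm2)). For uniqueness, if $Tw=w$ as well then \eqref{BCPeq} forces $p(z,w)=p(Tz,Tw)\le\lambda p(z,w)$, whence $p(z,w)=0$ and $z=w$; similarly $p(z,z)\le\lambda p(z,z)$ gives $p(z,z)=0$. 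The main obstacle in the whole scheme is the $b$-Cauchy estimate in the second paragraph; everything else is the usual contraction-mapping bookkeeping adapted to partial distances.
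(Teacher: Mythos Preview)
The paper does not prove Theorem~\ref{BCP}; it is quoted from Shukla~\cite{shu} without argument, so there is no in-house proof to compare against. Your route via Picard iteration is the standard one (and is essentially Shukla's), and the steps concerning geometric decay of $p(x_n,x_{n+1})$ and $p(x_n,x_n)$, passage to the limit by completeness, identification of the limit as a fixed point, and uniqueness are all fine.

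The genuine gap is precisely where you flag it. Your telescoped bound can be rewritten as
\[
p(x_n,x_m)\;\le\;s\lambda^{n}\Bigl(\sum_{j=0}^{m-n-2}(s\lambda)^{j}\Bigr)\,p(x_0,x_1)\;+\;s^{\,m-n-1}\lambda^{\,m-1}\,p(x_0,x_1),
\]
and this tends to $0$ only when $s\lambda<1$. For $\lambda\in[1/s,1)$ the geometric series in $s\lambda$ diverges and the right-hand side gives no control as $m\to\infty$; the vague phrase ``the control coming from the interplay of $s$ with the contraction constant'' does not supply the missing idea. The standard repair---visible in this very paper as Case~2 of Lemma~\ref{lemma2.1}, and present in Shukla's original---is to choose $n_0$ with $\lambda^{n_0}<1/s$ (possible since $\lambda<1$); then $T^{n_0}$ satisfies \eqref{BCPeq} with constant $\lambda^{n_0}$, the estimate above applies cleanly to the Picard sequence of $T^{n_0}$, and the unique fixed point of $T^{n_0}$ is the unique fixed point of $T$ by the device of Corollary~\ref{BCPcor}. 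Once this reduction to the regime $s\lambda<1$ is inserted, your sketch becomes a complete proof.
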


We give the following natural corollary:

\begin{corollary}\label{BCPcor}
	
	Let $(X,p)$ be a complete partial metric type space and let $T:X\to X$
	be a mapping such that there exists $\lambda\in [0,1)$ satisfying
	
	\[
	p(T^nx,T^ny) \leq \lambda \ p(x,y),
	\]
	for some $n>1$, whenever $x,y\in X.$ 
	Then $T$ has a unique fixed point.
\end{corollary}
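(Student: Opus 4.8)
The plan is to deduce the statement from Theorem \ref{BCP} via the classical device of passing to a power of $T$. Put $S := T^n$. By hypothesis $S$ satisfies $p(Sx,Sy)\le \lambda\, p(x,y)$ for all $x,y\in X$ with $\lambda\in[0,1)$, and $(X,p)$, being a partial metric type space, carries some coefficient $s\ge 1$ by (pm4) and is complete by assumption. Hence Theorem \ref{BCP} applies verbatim to $S$ and yields a \emph{unique} point $x^{*}\in X$ with $T^{n}x^{*}=x^{*}$.

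Next I would verify that this $x^{*}$ is in fact fixed by $T$ itself. Applying $T$ to the identity $T^{n}x^{*}=x^{*}$ gives $T^{n}(Tx^{*})=T(T^{n}x^{*})=Tx^{*}$, so $Tx^{*}$ is also a fixed point of $S=T^{n}$. Since Theorem \ref{BCP} guarantees that $S$ has exactly one fixed point, we are forced to conclude $Tx^{*}=x^{*}$. For uniqueness of the fixed point of $T$: if $Ty=y$ then trivially $T^{n}y=y$, so $y$ is a fixed point of $S$ and therefore $y=x^{*}$. This closes the argument.

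I do not expect a genuine obstacle here; the subtlety worth stating carefully is the double use of the uniqueness in Theorem \ref{BCP} — first to locate $x^{*}$, and then to upgrade ``$Tx^{*}$ is some fixed point of $T^{n}$'' to ``$Tx^{*}=x^{*}$'' — together with the elementary remark that every fixed point of $T$ is automatically a fixed point of $T^{n}$, which is what lets uniqueness descend from $T^{n}$ to $T$. No new estimate on the partial $b$-metric $p$ is required; the analytic content is entirely absorbed into the already-established Theorem \ref{BCP}. (Alternatively, one could route the proof through a transformation $p'=\phi(p)$ as in the preceding discussion, but the direct argument above is shorter.)
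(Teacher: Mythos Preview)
Your proof is correct and follows essentially the same route as the paper: apply Theorem \ref{BCP} to $T^{n}$ to obtain its unique fixed point, observe that $T$ of that point is again fixed by $T^{n}$ and hence coincides with it, and deduce uniqueness for $T$ from the inclusion of fixed points of $T$ among those of $T^{n}$. There is nothing to add.
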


\begin{proof}
	By Theorem \ref{BCP}, $T^n$ has a unique fixed point, say $x\in X$ with $T^nx = x$. Since
	
	\[
	T^{n+1}x = T(T^nx)=Tx = T^n(Tx),
	\]
	it follows that $Tx$ is a fixed point of $T^n$, and thus, by the uniqueness of $x$, we have
	$Tx = x$, that is, $T$ has a fixed point.  Since, the fixed point of $T$ is necessarily a fixed point of $T^n$, so it is unique.
\end{proof}

The main theorem of this section is as follows:

\begin{theorem}\label{res1}
	
	Let $d$ be a partial metric type on a space $X$ and $T : (X,p) \to (X,p)$ a self mapping such that:
	
	\[
	p(T^nx,T^ny) \leq  K p(x,y),
	\]
	
	for some $n>1$ and $0<K<1$, whenever $x,y,z\in X.$ If $\lambda$ is a nonnegative real such that $$K^{\frac{1}{n}}<\frac{1}{\lambda}<1,$$
	
	then the application $p':X^2\to [0,\infty)$ defined by :
	\[
	p'(x,y) = \sum_{i=0}^{n-1}\lambda^i p(T^ix,T^iy), \text{ whenever } x,y\in X,
	\]
	satisfies: 
	
	\begin{itemize}
		\item[i)] $p'$ is a  partial metric type on the space $X$;
		
		\item[ii)] $T : (X,p') \to (X,p')$ a self mapping such that:
		\[
		p'(Tx,Ty) \leq  \frac{1}{\lambda} p'(x,y).\footnote{i.e. $T$ is a contraction with constant $\frac{1}{\lambda}$ with respect to $p'$.}
		\]
		
	\end{itemize}
	
\end{theorem}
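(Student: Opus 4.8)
The plan is to establish (i) and (ii) by transferring each required property of $p'$ from the corresponding property of $p$, applied ``coordinate by coordinate'' to the finitely many maps $T^0=\mathrm{id},T,\dots,T^{n-1}$. It is convenient to record first that the standing hypothesis $K^{1/n}<\frac1\lambda<1$ is equivalent to the two conditions $\lambda>1$ and $\lambda^{n}K<1$; only the second will actually be used, and it is precisely the inequality that makes the construction work.

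For (i), properties (pm2) and (pm3) are immediate: for each $i$ the function $(x,y)\mapsto p(T^{i}x,T^{i}y)$ is symmetric and satisfies $0\le p(T^{i}x,T^{i}x)\le p(T^{i}x,T^{i}y)$ because $p$ does, and taking the nonnegative combination $\sum_{i=0}^{n-1}\lambda^{i}(\cdot)$ preserves both facts, giving $p'(x,y)=p'(y,x)$ and $0\le p'(x,x)\le p'(x,y)$. For (pm4), I would apply (pm4) for $p$ (with its coefficient $s$) to the triple $T^{i}x,\ T^{i}y,\ T^{i}z$, multiply by $\lambda^{i}\ge 0$, and sum over $i=0,\dots,n-1$; this yields $p'(x,y)+p'(z,z)\le s\bigl[p'(x,z)+p'(z,y)\bigr]$, so $p'$ inherits the same coefficient $s$. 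The only step needing a moment's care is (pm1). One direction is trivial. For the other, suppose $p'(x,x)=p'(x,y)=p'(y,y)$. Then
\[
0=p'(x,y)-p'(x,x)=\sum_{i=0}^{n-1}\lambda^{i}\bigl(p(T^{i}x,T^{i}y)-p(T^{i}x,T^{i}x)\bigr),
\]
a sum of nonnegative terms, so every term vanishes; likewise from $p'(x,y)=p'(y,y)$. Reading off the $i=0$ terms gives $p(x,y)=p(x,x)=p(y,y)$, whence $x=y$ by (pm1) for $p$.

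For (ii), I would simply compute. Reindexing,
\[
p'(Tx,Ty)=\sum_{i=0}^{n-1}\lambda^{i}p(T^{i+1}x,T^{i+1}y)=\frac{1}{\lambda}\sum_{j=1}^{n}\lambda^{j}\,p(T^{j}x,T^{j}y),
\]
and the inner sum equals $\sum_{j=1}^{n-1}\lambda^{j}p(T^{j}x,T^{j}y)+\lambda^{n}p(T^{n}x,T^{n}y)$. By the contraction hypothesis on $T^{n}$ together with $\lambda^{n}K<1$, the last term satisfies $\lambda^{n}p(T^{n}x,T^{n}y)\le\lambda^{n}K\,p(x,y)\le p(x,y)=\lambda^{0}p(T^{0}x,T^{0}y)$, so the inner sum is at most $\sum_{j=0}^{n-1}\lambda^{j}p(T^{j}x,T^{j}y)=p'(x,y)$. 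Hence $p'(Tx,Ty)\le\frac{1}{\lambda}p'(x,y)$, which is the asserted contraction with constant $\frac1\lambda\in(0,1)$.

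I do not anticipate a genuine obstacle; the argument is essentially bookkeeping. The two places to be slightly careful are the reverse implication of (pm1) --- where one must use that each summand $\lambda^{i}\bigl(p(T^{i}x,T^{i}y)-p(T^{i}x,T^{i}x)\bigr)$ is individually nonnegative, not merely that the sum is --- and the ``tail absorption'' in (ii), which is exactly where the hypothesis $\lambda^{n}K<1$ is consumed.
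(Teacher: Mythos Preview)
Your proposal is correct and follows essentially the same approach as the paper: verifying (pm1)--(pm4) termwise from the corresponding properties of $p$, and establishing the contraction in (ii) by the same reindexing computation (the paper phrases it as the identity $p'(Tx,Ty)=\frac{1}{\lambda}[p'(x,y)-p(x,y)]+\lambda^{n-1}p(T^{n}x,T^{n}y)$, which is exactly your reindexed sum before bounding the tail). Your justification of the reverse implication in (pm1) via ``sum of nonnegative terms vanishes'' is in fact more explicit than the paper's, which simply asserts the termwise conclusion as ``obvious.''
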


\begin{proof}
	We first prove that $p'$ is a partial metric type:
	\begin{enumerate}
		\item[(pm1)] Indeed for $x,y \in X,$ if $x=y$, then $$p'(x,y)=p'(x,x)=p'(y,y).$$
		Conversely, assume $x,y \in X,$ are such that $p'(x,y)=p'(x,x)=p'(y,y)$, which means 
		
		\[ \sum_{i=0}^{n-1}\lambda^i p(T^ix,T^iy) = \sum_{i=0}^{n-1}\lambda^i p(T^ix,T^ix) = \sum_{i=0}^{n-1}\lambda^i p(T^iy,T^iy).  \] 
		
		 It is therefore obvious that $$p(T^ix,T^ix)=p(T^iy,T^ix) =p(T^iy,T^iy) \text{ for } i=0,\cdots,n-1,$$  in particular $p(x,x)=p(y,y)=p(x,y)$, i.e. $x=y.$

	\vspace*{0.3cm}
		
		\item[(pm2)] For all $x,y\in X$  and for all $i=0,\cdots,n-1,$ we have

		$$0\leq p(T^ix,T^ix) \leq p(T^ix,T^iy) ,$$
		and hence 
		\[ \sum_{i=0}^{n-1}\lambda^i p(T^ix,T^ix) \leq \sum_{i=0}^{n-1}\lambda^i p(T^ix,T^iy) \]
i.e. $$p'(x,x) \leq p'(x,y).$$

		\item[(pm3)] For all $x,y\in X$, 
		
		\[ p'(x,y) = \sum_{i=0}^{n-1}\lambda^i p(T^ix,T^iy)=\sum_{i=0}^{n-1}\lambda^i p(T^iy,T^ix)=p'(y,x) ,   \]
		
	that is $$ p'(x,y) = p'(y,x) $$
		
	for all $x,y\in X.$	
		
		
		
		\item[(pm4)] For all $x,y,a\in X$, since 
		
		$$\lambda^i [p(T^ix,T^iy)+p(T^ia,T^ia) ]\leq \lambda^is [p(T^ix,T^ia)+ p(T^ia,T^iy)], $$
		
		we get 
		
		\begin{align*}
		p'(x,y) & = \sum_{i=0}^{n-1}\lambda^i p(T^ix,T^iy)\\
		& \leq \sum_{i=0}^{n-1}\lambda^is [p(T^ix,T^ia)+ p(T^ia,T^iy)]  - \sum_{i=0}^{n-1} \lambda^i p(T^ia,T^ia) \\
		& = s\sum_{i=0}^{n-1}\lambda^i [p(T^ix,T^ia)+ p(T^ia,T^iy)]  - \sum_{i=0}^{n-1}\lambda^i p(T^ia,T^ia)\\
		& =s[p'(x,a)+p'(a,y)] - p'(a,a).
		\end{align*}
So 
\[ p'(x,y)+p'(a,a) \leq s[p'(x,a)+p'(a,y)]  \]	

for any $x,y,a\in X.$	
		
	\end{enumerate}
	
	Hence, $p'$ is a partial metric type space on $X$.

\vspace*{0.3cm}
	
	We now prove that $T : (X,p') \to (X,p')$ is a contraction with constant $\frac{1}{\lambda}$.
	
	It is readily seen, by a simple computation, that 
	\[
	p'(Tx,Ty) = \frac{1}{\lambda} [p'(x,y)-p(x,y)]+\lambda^{n-1}p(T^nx,T^ny).
	\]
	Since  $T^n : (X,p) \to (X,p)$ is a contraction with constant $K$, it follows that
	
	\begin{align*}
	p'(Tx,Ty) & \leq \frac{1}{\lambda} [p'(x,y)-p(x,y)]+K\lambda^{n-1}p(x,y)\\
	& = \frac{1}{\lambda}p'(x,y)+\left( K-\frac{1}{\lambda^n}\right)\lambda^{n-1}p(x,y)\\
	& \leq \frac{1}{\lambda}p'(x,y),
	\end{align*}
	because of the choice $K^{\frac{1}{n}}<\frac{1}{\lambda}.$
	This completes the proof.
\end{proof}

As observed in \cite[Remark 2.2]{gabs}, under the assumptions of Theorem \ref{res1}, it is readily seen that

\begin{align*}
p'(x,y) & \leq \sum_{i=0}^{\infty}\lambda^i p(T^ix,T^iy)\\
& \leq  p'(x,y) + \lambda^n K p'(x,y) + \lambda^{2n} K^2p'(x,y)+\cdots \\
&= \frac{1}{1-\lambda^nK}p'(x,y).
\end{align*}

The term $h(x,y):=\sum_{i=0}^{\infty} \lambda^i p(T^ix,T^iy)$ therefore defines a partial metric type, equivalent to $p'$, as long as the series happen to converge for some $\lambda>1$.

\vspace*{0.3cm}

Next, we establish that whenever the mapping $T:(X,p)\to (X,p)$ is uniformly continuous and the partial metric type
$p$ is $0$-complete, then the partial metric type $p'$ is also $0$-complete.

\begin{theorem}\label{res2}
	We repeat the assumptions of Theorem \ref{res1}. If $T$ is uniformly continuous and the partial metric type $p$ is $0$-complete, then so is the partial metric type $p'$.
	
\end{theorem}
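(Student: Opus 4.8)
The plan is to reduce $0$-completeness of $p'$ to $0$-completeness of $p$ together with the propagation of convergence through the first $n$ iterates of $T$, this last step being exactly where uniform continuity enters.

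First I would record the elementary inequality
\[
p(u,v)\ \le\ p'(u,v)\qquad\text{for all }u,v\in X,
\]
which holds because $p'(u,v)=p(u,v)+\sum_{i=1}^{n-1}\lambda^i p(T^iu,T^iv)$ and every summand is nonnegative. Consequently, if $(x_k)_{k\ge 1}$ is $0$-Cauchy with respect to $p'$, i.e. $\lim_{k,l\to\infty}p'(x_k,x_l)=0$, then $\lim_{k,l\to\infty}p(x_k,x_l)=0$, so $(x_k)$ is $0$-Cauchy with respect to $p$. Since $(X,p)$ is $0$-complete, there is $x\in X$ with
\[
\lim_{k,l\to\infty}p(x_k,x_l)=\lim_{k\to\infty}p(x_k,x)=p(x,x)=0 .
\]

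The core of the argument is an induction on $i=0,1,\dots,n-1$ establishing the pair of assertions $p(T^ix,T^ix)=0$ and $\lim_{k\to\infty}p(T^ix_k,T^ix)=0$. For $i=0$ this is precisely the conclusion of $0$-completeness of $p$ recorded above. For the inductive step, suppose $\lim_k p(T^ix_k,T^ix)=0$; given $\varepsilon>0$, uniform continuity of $T$ supplies $\delta>0$ with $p(u,v)<\delta\Rightarrow p(Tu,Tv)<\varepsilon$, and since $p(T^ix_k,T^ix)<\delta$ for all large $k$, we get $p(T^{i+1}x_k,T^{i+1}x)<\varepsilon$ for all large $k$; as $\varepsilon$ is arbitrary, $\lim_k p(T^{i+1}x_k,T^{i+1}x)=0$. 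Then (pm2) yields $0\le p(T^{i+1}x,T^{i+1}x)\le p(T^{i+1}x,T^{i+1}x_k)\to 0$, so $p(T^{i+1}x,T^{i+1}x)=0$, completing the induction.

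It then remains only to assemble the three limits required by the definition of $0$-completeness for $p'$. The finitely many equalities $p(T^ix,T^ix)=0$ give $p'(x,x)=\sum_{i=0}^{n-1}\lambda^i p(T^ix,T^ix)=0$; since the sum defining $p'$ has finitely many terms and each $\lambda^i p(T^ix_k,T^ix)\to 0$, we obtain $\lim_k p'(x_k,x)=0$; and $\lim_{k,l}p'(x_k,x_l)=0$ is the hypothesis. Hence
\[
\lim_{k,l\to\infty}p'(x_k,x_l)=\lim_{k\to\infty}p'(x_k,x)=p'(x,x)=0 ,
\]
so $(X,p')$ is $0$-complete. I expect the inductive propagation of convergence through $T,T^2,\dots,T^{n-1}$ to be the only genuine obstacle: uniform continuity (not mere $\tau(p)$-continuity) is what lets one pass from $p(T^ix_k,T^ix)\to 0$ to $p(T^{i+1}x_k,T^{i+1}x)\to 0$, after which the remaining manipulations are routine since the sum has only $n$ terms.
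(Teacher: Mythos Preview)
Your argument is correct and follows essentially the same route as the paper: both use $p\le p'$ to transfer the $0$-Cauchy condition to $(X,p)$, invoke $0$-completeness there, and then push convergence through $T,\dots,T^{n-1}$ by uniform continuity before summing the finitely many terms of $p'$. The only cosmetic difference is that the paper applies the uniform continuity of all powers $T^i$ simultaneously (with a bookkeeping constant $M=\max_i\lambda^i$), whereas you do a cleaner step-by-step induction and are more explicit about verifying $p'(x,x)=0$.
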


\begin{proof}
	
	Since $p(x,y)\leq p'(x,y)$ for any $x,y\in X$, any $0$-Cauchy sequence in $(X,p')$ is also a $0$-Cauchy sequence in $(X,p)$. It is therefore enough to prove that, under uniform continuity of $T$, any convergent sequence $(x_n)_{n\geq 1} \subset(X,p)$
	such that there exists $x^*\in X$ with $$\lim\limits_{n\to \infty} p(x_n,x^*) = \lim\limits_{n,m\to \infty} p(x_n,x_m)=p(x^*,x^*)=0,$$ is such that 
there exists $y^*\in X$ with $$\lim\limits_{n\to \infty} p'(x_n,y^*) = \lim\limits_{n,m\to \infty} p'(x_n,x_m)=p'(y^*,y^*)=0.$$
	
	\vspace*{0.5cm}
	
	So let $(x_n)_{n\geq 1}$ be a sequence in the $G$-metric space $(X,p)$ such that $(x_n)_{n\geq 1}$ converges to some $\xi \in (X,p).$ and $p(\xi,\xi)=0.$ Set $M=\max\{\lambda^i, i=1,\cdots, n-1\}$ and observe that $$M\geq \lambda>1.$$
	Since all the powers of $T$ are also uniformly continuous in $(X,d)$, we can write that, for any $\varepsilon>0$, there exists $\eta >0$ such that for all $x,y \in X,$ and $i=1,\cdots, n-1$
	
	\[
	p(x,y) <\eta \Longrightarrow p(T^ix,T^iy)<\frac{\varepsilon}{Mn}.
	\]
	Since $\{x_n\}$ converges to some $\xi \in (X,p),$ and $p(\xi,\xi)=0$ there exists $n_0\in \mathbb{N}$ such that $$k\geq n_0 \Longrightarrow p(\xi,x_k)<\eta.$$
	
	Then 
	$$k>n_0 \Longrightarrow p(T^i\xi,T^ix_k)<\frac{\varepsilon}{Mn} \ \text{ for }i=1,\cdots, n-1,$$
	i.e.
	
	$$   p'(\xi,x_k,) < \frac{\varepsilon}{n}\left[    \frac{1}{M}+\frac{\lambda}{M}+\cdots+ \frac{\lambda^{n-1}}{M}\right]<\varepsilon. $$
	Thus $(x_n)_{n\geq 1}$ converges to $\xi$ with respect to the partial metric space $p'$ and $p'(\xi,\xi)=0.$

This completes the proof.	
	
\end{proof}

In concluding this section, we introduce what we call partial ultra-metrics and conjecture that the construction of Frink\cite{f} could be used to obtain a modular metric
from an  ultra-modular metric.
Taking inspiration from the theory of ultra-metric space and that of metric type spaces (see \cite{gaba}), we can define:

\begin{definition}\label{def2}
	A partial ultra-metric
	on the set $X$ is is a function $p: X \times X \to [0, \infty)$ such that:
	\begin{enumerate}
		\item[(pm1)] $x = y$ iff $(p(x, x) = p(x, y)=p(y,y)$ whenever $x, y \in X$,
		\item[(pm2)] $0\leq p(x, x)\leq p(x, y)$ whenever $x, y \in X$,
		
		\item[(pm3)] $p(x, y) = p(y, x);$ whenever $x, y \in X$,
		
		\item[(pm4)] here exists a real number $s\geq 1$ such that
		$$p(x, y) + p(z,z)\leq  \max\{p(x,z)+p(z,y)\}$$ 
		for any points $x,y,z\in X$. 
		
	\end{enumerate}

	The pair $(X, p)$ is called a partial ultra-metric space .
\end{definition}

We are interested in the following question: 
\begin{problem}
	\textcolor{blue}{Given a partial ultra metric $\omega$ on a non empty set $X$, can we construct a partial metric type $\omega'$ on $X$ such that $\omega$ and $\omega'$ are equivalent? If not, are there conditions which guarantee the existence of such a partial metric type $\omega'$ on $X$?}
\end{problem}
The authors plan to take up this investigation \cite{leko} by using ``\textit{the chain construction}" as a tool.

\section{Main results}

In this section, we present some fixed point results for Chatterjea type mapping in the setting of a partial $b$-metric space. Following theorem is an analog to Chatterjea fixed point theorem in partial $b$-metric space.

\begin{theorem}\label{main}
Let $(X, p)$ be a complete partial $b$-metric space with coefficient
$s \geq  2$ and $T:X\to X$ be a mapping satisfying the following condition:
\begin{equation}\tag{Ch}\label{Chat}
p(Tx,Ty)\le \lambda  \,\left[p(x,Ty)+p(y,Tx)\right]\,.
\end{equation}

for all $x,y \in X$, where $\lambda\in \left[0,\frac{1}{s^2}\right)$. Then $T$ has a unique fixed point $u \in X$ and $p(u, u)=0$.
\end{theorem}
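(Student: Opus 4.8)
The plan is to mimic the classical Chatterjea argument, but carry out each estimate inside the partial $b$-metric, keeping careful track of the coefficient $s$. Start from an arbitrary $x_0 \in X$ and define the Picard iterates $x_{n+1} = Tx_n$. Applying \eqref{Chat} with $x = x_{n-1}$, $y = x_n$ gives
\[
p(x_n, x_{n+1}) \le \lambda\left[p(x_{n-1}, x_{n+1}) + p(x_n, x_n)\right].
\]
Now I would use (pm2) to bound $p(x_n,x_n) \le p(x_{n-1}, x_n)$, and the $b$-triangle inequality (pm4) to write $p(x_{n-1}, x_{n+1}) \le s\left[p(x_{n-1}, x_n) + p(x_n, x_{n+1})\right] - p(x_n,x_n) \le s\left[p(x_{n-1}, x_n) + p(x_n, x_{n+1})\right]$. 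Substituting and collecting the $p(x_n, x_{n+1})$ terms yields $(1 - \lambda s)\,p(x_n, x_{n+1}) \le \lambda(s+1)\,p(x_{n-1}, x_n)$, so with $k := \lambda(s+1)/(1-\lambda s)$ we get $p(x_n, x_{n+1}) \le k\,p(x_{n-1}, x_n)$. The first thing to check is that $\lambda \in [0, 1/s^2)$ forces $k < 1$: since $s \ge 2$ we have $s+1 \le s^2 - 1 < 1/\lambda - 1$ roughly speaking — more precisely $\lambda(s+1) < (s+1)/s^2 \le 1 - \lambda s$ reduces to $\lambda(2s+1) < 1$, which follows from $\lambda < 1/s^2 \le 1/(2s+1)$ for $s \ge 2$. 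Hence $p(x_n, x_{n+1}) \le k^n p(x_0, x_1) \to 0$.

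Next I would show $(x_n)$ is a $p$-Cauchy sequence. Using (pm4) repeatedly (the $b$-triangle inequality introduces powers of $s$ at each step, exactly as in the $b$-metric case), for $m > n$ one bounds $p(x_n, x_m)$ by a sum $\sum_{j=n}^{m-1} s^{\,j-n+1} p(x_j, x_{j+1}) \le \sum_{j \ge n} s^{\,j-n+1} k^{j} p(x_0,x_1)$, which converges provided $sk < 1$. So the second numerical check is $sk < 1$, i.e. $\lambda s(s+1) < 1 - \lambda s$, i.e. $\lambda(s^2 + 2s) < 1$; again this is implied by $\lambda < 1/s^2 \le 1/(s^2+2s)$ for $s \ge 2$. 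Therefore $\lim_{n,m\to\infty} p(x_n, x_m) = 0$; in particular the limit exists and is finite, so $(x_n)$ is $p$-Cauchy, and by completeness there is $u \in X$ with
\[
\lim_{n,m\to\infty} p(x_n,x_m) = \lim_{n\to\infty} p(u, x_n) = p(u,u) = 0.
\]

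To see $u$ is a fixed point, apply \eqref{Chat} with $x = u$, $y = x_n$: $p(Tu, x_{n+1}) = p(Tu, Tx_n) \le \lambda\left[p(u, x_{n+1}) + p(x_n, Tu)\right]$. Then estimate $p(u, Tu) \le s\left[p(u, x_{n+1}) + p(x_{n+1}, Tu)\right] - p(x_{n+1},x_{n+1}) \le s\, p(u, x_{n+1}) + s\,p(x_{n+1}, Tu)$ and feed in the previous line; since $p(u, x_{n+1}) \to 0$ and $p(x_n, Tu) \to p(u, Tu)$ (this last step needs care: bound $p(x_n, Tu) \le s[p(x_n, u) + p(u, Tu)]$ one way and use (pm2)/(pm4) the other way, or pass to the limit in the inequality directly), one drives the coefficient of $p(u,Tu)$ on the right below $1$ using $\lambda s < \lambda s^2 < 1$, forcing $p(u, Tu) = 0$ and hence $p(Tu,Tu) \le p(u,Tu) = 0$, so by (pm1) $Tu = u$. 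For uniqueness, if $Tv = v$ then \eqref{Chat} gives $p(v,v) = p(Tv,Tv) \le 2\lambda\, p(v,v)$, and $2\lambda < 2/s^2 \le 1/2 < 1$ yields $p(v,v)=0$; then $p(u,v) = p(Tu,Tv) \le \lambda[p(u,v) + p(v,u)] = 2\lambda\, p(u,v)$ forces $p(u,v) = 0$, whence $u = v$ by the remark after Definition 1.1.

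The main obstacle is purely bookkeeping: making sure the single hypothesis $\lambda < 1/s^2$ is simultaneously strong enough for all three places where a contraction factor must beat $1$ — the ratio $k < 1$ in the consecutive-term estimate, the summability condition $sk < 1$ for Cauchyness, and the factor $\lambda s^2 < 1$ (from unwinding two triangle inequalities) in the fixed-point-identification step. Each reduces to a linear inequality in $\lambda$ of the form $\lambda \cdot (\text{polynomial in } s) < 1$, and one checks that for $s \ge 2$ the worst such polynomial is dominated by $s^2$; this is exactly why the hypothesis demands $s \ge 2$ rather than merely $s \ge 1$. I would state these three inequalities explicitly at the start and verify them once, so the rest of the proof reads cleanly.
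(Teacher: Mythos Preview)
Your overall strategy matches the paper's: iterate $x_{n+1}=Tx_n$, establish geometric decay of $p(x_n,x_{n+1})$, telescope via (pm4) to get a $p$-Cauchy sequence, invoke completeness, and then identify the limit as a fixed point using two applications of (pm4) together with \eqref{Chat}. The uniqueness and $p(u,u)=0$ arguments are identical to the paper's.

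However, your numerical verifications are wrong, and this is a genuine gap, not a typo. You claim $1/s^2\le 1/(2s+1)$ for $s\ge2$; at $s=2$ this reads $1/4\le1/5$, which is false (the inequality $2s+1\le s^2$ holds only for $s\ge 1+\sqrt2$). Worse, the claim $1/s^2\le 1/(s^2+2s)$ is false for \emph{every} $s>0$, since $s^2+2s>s^2$. Concretely, at $s=2$ and $\lambda=0.24<1/4$ your ratio $k=\lambda(s+1)/(1-\lambda s)=0.72/0.52\approx1.38>1$, so neither the consecutive-term decay nor the Cauchy summability $sk<1$ goes through as written.

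The paper avoids this by treating the combination $p(x_{n-1},x_{n+1})+p(x_n,x_n)$ as a unit rather than estimating the two pieces separately: it writes $p(x_{n-1},x_{n+1})\le p(x_{n-1},x_n)+p(x_n,x_{n+1})-p(x_n,x_n)$, so the $p(x_n,x_n)$ terms cancel and no factor of $s$ is picked up at this step. That yields the smaller ratio $\mu=\lambda/(1-\lambda)$, for which $s\mu<1$ reduces to $\lambda(s+1)<1$, and $s+1\le s^2$ \emph{does} hold for all $s\ge2$. Your decision to bound $p(x_n,x_n)$ via (pm2) and $p(x_{n-1},x_{n+1})$ via (pm4) independently is exactly what inflates the constants beyond what the hypothesis $\lambda<1/s^2$ can absorb. (Whether the paper's $s$-free triangle step is itself legitimate in a partial $b$-metric with $s\ge2$ is a fair question, but that is the estimate on which the paper's argument rests.)
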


\begin{proof}
	Let us first show that if $T$ has a fixed point $u$, then it is unique and $p(u,u)=0$.
	
	From \eqref{Chat}, we have
	\[ p(u,u)=p(Tu,Tu) \leq \lambda[p(u,Tu)+ p(u,Tu)] = 2\lambda p(u,Tu) < p(u,u),\]
	a contradiction, unless $p(u,u)=0.$

Suppose $u, v \in X$ are two distinct fixed points of $T$, that is, $T u = u$, $T v = v$ and $u\neq v.$. Then it follows from \eqref{Chat} that

\begin{align*}
	p(u,v) = p(Tu,Tv) &\leq \lambda[p(u,Tv)+p(v,Tu)]\\ 
	                  &\leq 2\lambda p(u,v) < p(u,v)
\end{align*}	
a contradiction, unless $p(u,v)=0$, i.e. $u=v.$ Thus if a fixed point of $T$
exists, then it is unique.	
For existence of fixed point, let $x_0 \in X$ be arbitrary;
set $x_n = T^n x_0$ and $b_n = p(x_n , x_{n+1} )$. Without loss of generality, we may assume that $b_n>0$ for all $n\geq 0$ otherwise $x_n$ is a fixed point of $T$ for at least one $n \geq 0$.

For any $n \in \mathbb{N}$, it follows from \eqref{Chat} that	
	
\begin{align*}
	b_n = p(x_n , x_{n+1} ) &= p(T x_{n-1} , T x_n )\\
	                        & \leq \lambda [p(x_{n-1}, x_{n+1})+ p(x_n,x_n)] \\
	                        & \leq \lambda [p(x_{n-1}, x_{n}) + p(x_n,x_{n+1}) -p(x_n,x_n)+p(x_n,x_n)     ]\\
	                        & = \lambda [p(x_{n-1}, x_{n}) + p(x_n,x_{n+1})]\\
	                        & = \lambda[b_{n-1}+b_n],
\end{align*}	

therefore $b_n \leq \mu b_{n-1}$ where $\mu = \frac{\lambda}{1-\lambda} <1$ (since $\lambda\in \left[0,\frac{1}{s^2}\right)\subset \left[0,\frac{1}{2}\right) $). On repeating this, one obtains 

\begin{equation}\label{eq7}
	b_n \leq \mu^nb_0
\end{equation}

hence $\lim\limits_{n\to \infty}b_n =0.$
	
For $m,n \in \mathbb{N}$ with $m>n$, we obtain

\begin{align*}
p(x_n,x_m) &\leq s[p(x_n,x_{n+1}) + p(x_{n+1},x_m) ] - p(x_{n+1},x_{n+1}) \\
& \leq s p(x_n,x_{n+1}) +s^2[p(x_{n+1},x_{n+2})+p(x_{n+2},x_{m})] - s p(x_{n+2},x_{n+2})\\
&\leq s p(x_n,x_{n+1}) + s^2 p(x_{n+1},x_{n+2})+s^3p(x_{n+2},x_{n+2})\\
& + \cdots + s^{m-n}p(x_{m-1},x_m).
\end{align*}
	
Using \eqref{eq7} in the above inequality,	
	
	\begin{align*}
		p(x_n,x_m) &\leq s \mu^n [1+s\mu+(s\mu)^2+\cdots]p(x_0,x_1)\\
		&\leq \frac{s\mu^n}{1-s\mu}p(x_0,x_1).
	\end{align*}
	
As $\lambda\in \left[0,\frac{1}{s^2}\right) \subset \left[0,\frac{1}{s}\right)$	and $s>1$, it follows from the above inequality that
	
	$$\lim\limits_{n,m\to \infty}p(x_n,x_m) =0.$$
	
Therefore, $(x_n)$ is a Cauchy sequence in $X$. By completeness of X there exists $ x^*\in X$ such that	
	
	\begin{equation}\label{eq8}	
	\lim_{n\to\infty}p(x^*,x_n)=\lim_{n,m\to\infty} p(x_n,x_m)=p(x^*,x^*)=0. 
		\end{equation}

We shall show that $x^*$ is a fixed point of $T$.	

For any $n \in \mathbb{N}$ it follows from \eqref{Chat} that

\begin{align*}
	p(x^*,Tx^*) &\leq s[p(x^*, x_{n+1} ) + p(x_{n+1} , T x^*)] - p(x_{n+1} , x_{n+1} )\\
	&\leq s[p(x^*, x_{n+1} ) + p(Tx_{n} , T x^*)]\\
	& \leq s[p(x^*, x_{n+1} ) +\lambda (p(x_n,Tx^*)+p(x^*,x_{n+1}))]\\
	& \leq s p(x^*, x_{n+1} ) + s\lambda p(x^*,x_{n+1}) \\
	& + s^2\lambda[p(x_n,x^*)+p(x^*,Tx^*)] - s\lambda p(x^*,x^*).
\end{align*}

Taking limit as $n\to \infty$, as $p(x^*,x^*)=0,$ we have

\[ p(x^*,Tx^*)\leq s^2 \lambda p(x^*,Tx^*)< p(x^*,Tx^*) , \]

--a contradiction, unless $p(x^*,Tx^*)=0,$ that is, $Tx^* = x^*$.
Thus, $x^*$ is the unique fixed point of $T$.
\end{proof}

\begin{theorem}\label{main22}
Let $(X, p)$ be a complete partial $b$-metric space with coefficient
$s >  1$ and $T:X\to X$ be a mapping satisfying the following condition:
\begin{equation}\tag{Ch2}\label{Chat2}
p(Tx,Ty)\leq \lambda  \max\{p(x,y),p(x,Ty),p(y,Tx)\}\,.
\end{equation}

for all $x,y \in X$, where $\lambda\in \left[0,\frac{1}{s}\right)$. Then $T$ has a unique fixed point $u \in X$ and $p(u, u)=0$.
\end{theorem}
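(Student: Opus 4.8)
\textbf{Proof proposal for Theorem \ref{main22}.}

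The plan is to mimic the structure of the proof of Theorem \ref{main}, using the Picard iteration $x_n = T^n x_0$ and controlling $b_n = p(x_n, x_{n+1})$. First I would dispose of uniqueness and the $p(u,u)=0$ claim: if $Tu=u$, then \eqref{Chat2} gives $p(u,u) = p(Tu,Tu) \le \lambda \max\{p(u,u), p(u,u), p(u,u)\} = \lambda p(u,u)$, which forces $p(u,u)=0$ since $\lambda < 1$. If $u,v$ are both fixed, then $p(u,v) = p(Tu,Tv) \le \lambda \max\{p(u,v), p(u,v), p(u,v)\} = \lambda p(u,v)$, so $p(u,v)=0$ and $u=v$.

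Next, for existence, set $x_n = T^n x_0$ and $b_n = p(x_n, x_{n+1})$, assuming $b_n > 0$ for all $n$ (else some $x_n$ is already fixed). Applying \eqref{Chat2} with $x = x_{n-1}$, $y = x_n$ gives $b_n \le \lambda \max\{p(x_{n-1},x_n), p(x_{n-1}, x_{n+1}), p(x_n, x_n)\}$. Using (pm4) one has $p(x_{n-1},x_{n+1}) + p(x_n,x_n) \le s[b_{n-1} + b_n]$, hence $p(x_{n-1},x_{n+1}) \le s(b_{n-1}+b_n)$; and $p(x_n,x_n) \le b_{n-1}$ by (pm2) (actually $p(x_n,x_n) \le \min\{b_{n-1}, b_n\}$). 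So the max is bounded by $\max\{b_{n-1}, s(b_{n-1}+b_n)\} = s(b_{n-1}+b_n)$, giving $b_n \le \lambda s(b_{n-1}+b_n)$, i.e. $b_n \le \frac{\lambda s}{1-\lambda s} b_{n-1}$. The delicate point is that $\frac{\lambda s}{1 - \lambda s}$ need not be less than $1$ merely from $\lambda < \frac{1}{s}$ — one needs $\lambda s < \frac{1}{2}$, i.e. $\lambda < \frac{1}{2s}$. This is the main obstacle: the hypothesis $\lambda \in [0, 1/s)$ as stated looks too weak to make the ratio a contraction factor through this crude estimate, so I would either (a) sharpen the estimate — e.g. bound $p(x_{n-1},x_{n+1})$ more carefully, perhaps by first showing $(b_n)$ is nonincreasing via a separate argument, or noting that the max is often achieved by $p(x_{n-1},x_n) = b_{n-1}$ rather than the triangle-inflated term — or (b) restrict to $\lambda < \frac{1}{2s^2}$ or similar. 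A cleaner route: show directly that if $b_n \ge b_{n-1}$ for some $n$ then $p(x_{n-1},x_{n+1}) \le s(b_{n-1}+b_n) \le 2s b_n$ forces $b_n \le 2\lambda s b_n < b_n$, a contradiction provided $2\lambda s < 1$; so under $\lambda < \frac{1}{2s}$ the sequence $(b_n)$ is strictly decreasing, and then the max is $b_{n-1}$ itself if $p(x_{n-1},x_{n+1}) \le b_{n-1}$... this still requires the tighter bound on $\lambda$. I would flag this and proceed under whatever constant actually makes $\mu := $ the ratio satisfy $\mu < 1$, obtaining $b_n \le \mu^n b_0$ and $\lim b_n = 0$.

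Once $b_n \to 0$ geometrically, the Cauchy estimate is identical to Theorem \ref{main}: iterating (pm4) gives $p(x_n,x_m) \le s b_n + s^2 b_{n+1} + \cdots + s^{m-n} b_{m-1} \le s\mu^n[1 + s\mu + (s\mu)^2 + \cdots] b_0 \le \frac{s\mu^n}{1-s\mu} b_0$ whenever $s\mu < 1$, so $\lim_{n,m\to\infty} p(x_n,x_m) = 0$ and $(x_n)$ is $0$-Cauchy. By completeness there is $x^* \in X$ with $\lim_n p(x^*, x_n) = \lim_{n,m} p(x_n,x_m) = p(x^*,x^*) = 0$. Finally, to show $Tx^* = x^*$, apply (pm4) and then \eqref{Chat2}: $p(x^*, Tx^*) \le s[p(x^*, x_{n+1}) + p(x_{n+1}, Tx^*)] - p(x_{n+1},x_{n+1}) \le s p(x^*,x_{n+1}) + s\,p(Tx_n, Tx^*) \le s p(x^*,x_{n+1}) + s\lambda \max\{p(x_n,x^*), p(x_n, Tx^*), p(x^*, x_{n+1})\}$. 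Using (pm4) again, $p(x_n, Tx^*) \le s[p(x_n, x^*) + p(x^*, Tx^*)]$, so letting $n \to \infty$ (all terms involving $x_n$ and $x^*$ tend to $0$) yields $p(x^*, Tx^*) \le s\lambda \cdot s\, p(x^*, Tx^*) = s^2\lambda\, p(x^*, Tx^*)$, and here again one needs $s^2\lambda < 1$ — so in fact the honest hypothesis for this last step alone is $\lambda < \frac{1}{s^2}$, not $\frac{1}{s}$. I would therefore expect the correct statement to require $\lambda \in [0, \frac{1}{s^2})$ (matching Theorem \ref{main}), and I would either prove it under that hypothesis or note precisely where $\lambda < 1/s$ is insufficient; with $\lambda < 1/s^2$ the final inequality $p(x^*,Tx^*) \le s^2\lambda\, p(x^*,Tx^*) < p(x^*,Tx^*)$ forces $p(x^*,Tx^*) = 0$, hence $Tx^* = x^*$, and uniqueness was already established.
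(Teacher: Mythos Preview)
Your uniqueness and $p(u,u)=0$ arguments are fine (indeed cleaner than the paper's, which has a copy-paste lapse at that point). The substantive methodological difference is in the Picard step. You control $\max\{b_{n-1},\,p(x_{n-1},x_{n+1}),\,p(x_n,x_n)\}$ by applying (pm4) to the middle term, picking up a factor of $s$ and landing at $b_n \le \lambda s(b_{n-1}+b_n)$; this is what drives you toward $\lambda < \tfrac{1}{2s}$. The paper instead performs a \emph{case split} on which term realises the maximum. In Case~1 (the max is $b_{n-1}$) it obtains $b_n \le \lambda\,b_{n-1}$ directly, with no $s$-penalty, so $s\lambda < 1$ already suffices for the Cauchy estimate $p(x_n,x_m)\le \frac{s\lambda^n}{1-s\lambda}\,b_0$; and in the fixed-point step the paper again invokes the Case~1 alternative, writing $p(Tx_n,Tx^*)\le \lambda\,p(x_n,x^*)$ so that $p(x^*,Tx^*)\le s\,p(x^*,x_{n+1})+s\lambda\,p(x^*,x_n)\to 0$, avoiding the $s^2\lambda$ factor you encountered.

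The catch is Case~2 (the max is $p(x_{n-1},x_{n+1})$): the paper dismisses it in one line (``a similar argument as in Case~1''), but this is precisely where the $s$-factor you flagged reappears, since one must then use (pm4) to relate $p(x_{n-1},x_{n+1})$ back to $b_{n-1}$ and $b_n$; the same issue arises in the fixed-point step when the relevant max is $p(x_n,Tx^*)$ rather than $p(x_n,x^*)$. So your diagnosis that the hypothesis $\lambda<1/s$ is delicate is well-founded: the paper's case-split route genuinely sidesteps your obstacle whenever Case~1 prevails, but is not explicit about Case~2, and your suggestion that a bound like $\lambda<1/s^2$ (matching Theorem~\ref{main}) would make the argument go through uniformly is a reasonable repair.
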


\begin{proof}
Let us first show that if $T$ has a fixed point $u$, then it is unique and $p(u,u)=0$.

Suppose $u, v \in X$ are two distinct fixed points of $T$, that is, $T u = u$, $T v = v$ and $u\neq v.$. Then it follows from \eqref{Chat2} that

\begin{align*}
p(x_{n+1},x_n) & = p(Tx_{n},Tx_{n-1})\\
               &  \leq \lambda \max\{ p(x_{n},x_{n-1}), p(x_n,x_n),p(x_{n-1},x_{n+1})  \} \\
               & \leq \max\{ p(x_{n},x_{n-1}), p(x_{n-1},x_{n+1})  \}
\end{align*}
since $p(x,x)\leq p(x,y)$ whenever $x,y\in X.$

\vspace*{0.3cm}

At this point, we distinguish between two cases.

\begin{enumerate}
\item[Case 1.] 
 $\max\{ p(x_{n},x_{n-1}), p(x_{n-1},x_{n+1})  \} =p(x_{n},x_{n-1}) $

 \[ p(x_{n+1},x_{n}) \leq \lambda p(x_{n-1},x_{n}). \]

Iterating this process, we get

\[ p(x_{n+1},x_{n}) \leq  \lambda^n p(x_{0},x_{1}), \]
for all $n \in \mathbb{N}$.

From the proof of the previous theorem, we can easily establish that for $m,n \in \mathbb{N}$ with $m>n$, 

	\begin{align*}
p(x_n,x_m)
\leq \frac{s\lambda^n}{1-s\lambda}p(x_0,x_1).
\end{align*}


As $\lambda\in \left[0,\frac{1}{s}\right)$	and $s>1$, it follows from the above inequality that

$$\lim\limits_{n,m\to \infty}p(x_n,x_m) =0.$$

Therefore, $(x_n)$ is a Cauchy sequence in $X$. By completeness of X there exists $ x^*\in X$ such that	

\begin{equation}\label{eq88}	
\lim_{n\to\infty}p(x^*,x_n)=\lim_{n,m\to\infty} p(x_n,x_m)=p(x^*,x^*)=0. 
\end{equation}

We shall show that $x^*$ is a fixed point of $T$. For any $n\in \mathbb{N}$, we have

\begin{align*}
	p(x^*,Tx^*) & \leq s [p(x^*,x_{n+1})+p(x_{n+1},Tx^*) ] - p(x_{n+1},x_{n+1}) \\
	& \leq s [p(x^*,x_{n+1})+p(x_{n+1},Tx^*) ]\\
	& \leq s p(x^*,x_{n+1}) + s \lambda p(x^*,x_{n}).
\end{align*}

Using \eqref{eq88} in the above inequality we obtain $p(x^*, T x^*) = 0$, that is, $Tx^*=x^*$.
Thus, $x^*$ is the unique fixed point of $T$. 

\vspace*{0.2cm}

\item[Case 2.] 
If $\max\{ p(x_{n},x_{n-1}), p(x_{n-1},x_{n+1})  \} =p(x_{n+1},x_{n-1}) $, a similar argument as in the Case 1 leads to the existence of a unique fixed point of $T$.

\end{enumerate}

\end{proof}

\begin{problem}
Theorem \ref{main} advocates for the existence of a fixed point for a Chatterjea contraction in a complete partial $b$-metric space for which the constant $s$ is such that $s\geq 2$. An interesting question/problem could be to investigate if Theorem \ref{main} can be formulated for values $1<s<2$ with an appropriate interval for the contraction constant $\lambda.$ Of course Theorem \ref{main} remains true for the sharp inequality $s\geq \sqrt{2}$ but the our question remains since we still have to figure out what happens for $1\leq s < \sqrt{2}.$
\end{problem}

\vspace*{0.5cm}

We conclude this section by presenting a joint Chatterjea-Kannan fixed point leading to the existence of a unique fixed point.
\begin{theorem}\label{main2}
	Let $(X, p)$ be a $0$-complete partial $b$-metric space with coefficient
	$s \geq  1$ and $T:X\to X$ be a self mapping satisfying the following condition:
	\begin{align*}\tag{Ch-Ka}\label{Chat-Kan}
	p(Tx,Ty)&\leq \lambda_1 p(x,y) +\lambda_2 \frac{p(x,Tx)p(y,Ty)}{1+p(x,y)} +\lambda_3 \frac{p(x, T y)p(y, T x)}{1+p(x,y)} \nonumber \\
	&+ \lambda_4 \frac{p(x, T x)p(x, T y)}{1+p(x,y)}+\lambda_5 \frac{p(y, T y)p(y, T x)}{1+p(x,y)}, \nonumber \\
	\end{align*}
	
	for all $x,y \in X$, where $\lambda_1,\lambda_2,\lambda_3,\lambda_4,\lambda_5$ are nonnegative real numbers
	satisfying:
	
	$\lambda_1+\lambda_2+2s\lambda_3+s\lambda_4+s\lambda_5 <1.$ Then $T$ has a unique fixed point $u$ in $X$ and $p(u, u)=0$.
	

%
\end{theorem}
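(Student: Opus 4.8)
The plan is to follow the now-standard Picard-iteration strategy used for Theorem \ref{main} and Theorem \ref{main22}, adapted to the hybrid rational contraction \eqref{Chat-Kan}. First I would establish uniqueness and the property $p(u,u)=0$ for any putative fixed point $u$: substituting $x=y=u$ into \eqref{Chat-Kan} makes the terms involving $p(u,Tu)=p(u,u)$ and $p(u,y)=p(u,u)$ collapse, yielding $p(u,u)\leq (\lambda_1+\lambda_2+\lambda_3+\lambda_4+\lambda_5)\,p(u,u)$ after bounding each rational quotient by its numerator (since $1+p(u,u)\geq 1$); because the coefficient sum is strictly less than $1$ by hypothesis, $p(u,u)=0$. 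For two fixed points $u,v$, plug $x=u$, $y=v$ into \eqref{Chat-Kan}: the $\lambda_1$ term gives $\lambda_1 p(u,v)$, the $\lambda_2,\lambda_4,\lambda_5$ terms vanish (each has a factor $p(u,Tu)=p(u,u)=0$ or $p(v,Tv)=p(v,v)=0$), and the $\lambda_3$ term is bounded by $\lambda_3 p(u,v)^2/(1+p(u,v))\leq \lambda_3 p(u,v)$, so $p(u,v)\leq (\lambda_1+\lambda_3)p(u,v)$, forcing $p(u,v)=0$, hence $u=v$.

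Next I would set up the iteration $x_0\in X$ arbitrary, $x_n=T^nx_0$, $b_n=p(x_n,x_{n+1})$, and assume $b_n>0$ for all $n$ (else some $x_n$ is already a fixed point). Applying \eqref{Chat-Kan} with $x=x_{n-1}$, $y=x_n$: the $\lambda_1$ term gives $\lambda_1 b_{n-1}$; the $\lambda_2$ term, with numerator $p(x_{n-1},x_n)p(x_n,x_{n+1})=b_{n-1}b_n$ and denominator $1+b_{n-1}\geq b_{n-1}$ — wait, one must be careful: bounding $b_{n-1}b_n/(1+b_{n-1})\leq b_n$ is too weak to produce a contraction. Instead I would bound $b_{n-1}b_n/(1+b_{n-1})\leq b_{n-1}$ only when that helps; more robustly, use $1+p(x_{n-1},x_n)\ge 1$ to keep $\lambda_2 b_{n-1}b_n$ and then, noting $b_n\le b_{n-1}$ is what we are trying to prove, argue by contradiction or use the triangle-type inequality (pm4) to expand $p(x_{n-1},x_{n+1})$ in the $\lambda_3$ and $\lambda_4,\lambda_5$ numerators. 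Concretely $p(x_{n-1},Tx_n)=p(x_{n-1},x_{n+1})\leq s[p(x_{n-1},x_n)+p(x_n,x_{n+1})]-p(x_n,x_n)\leq s(b_{n-1}+b_n)$, while $p(x_n,Tx_{n-1})=p(x_n,x_n)\leq b_{n-1}$. Substituting these bounds and dividing by denominators $\geq 1$, I would arrive at an inequality of the form $b_n\leq A b_{n-1}+B b_n + C b_n^2/(1+b_{n-1})$ with $A+B<1$ after using the coefficient constraint $\lambda_1+\lambda_2+2s\lambda_3+s\lambda_4+s\lambda_5<1$; since $b_n^2/(1+b_{n-1})\le b_n$ only gives $C b_n$, the $s$-weighted coefficient bookkeeping is exactly calibrated so that the total multiplier of $b_n$ stays below $1$, yielding $b_n\leq \mu b_{n-1}$ with $\mu<1$, hence $b_n\leq \mu^n b_0\to 0$.

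From $b_n\leq \mu^n b_0$ the Cauchy estimate is identical to the one in the proof of Theorem \ref{main}: iterating (pm4) gives $p(x_n,x_m)\leq \frac{s\mu^n}{1-s\mu}p(x_0,x_1)$ provided $s\mu<1$ — which I would need to check follows from the hypothesis (if not, the weaker bound $\mu<1$ together with a slightly different grouping of the telescoping still gives $\lim_{n,m\to\infty}p(x_n,x_m)=0$ because $\mu$ can be taken $<1/s$ from the coefficient constraint when $s$ multiplies $\lambda_3,\lambda_4,\lambda_5$). So $(x_n)$ is $0$-Cauchy; by $0$-completeness there is $x^*$ with $\lim_n p(x^*,x_n)=\lim_{n,m}p(x_n,x_m)=p(x^*,x^*)=0$. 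Finally I would show $Tx^*=x^*$: apply (pm4) to get $p(x^*,Tx^*)\leq s[p(x^*,x_{n+1})+p(x_{n+1},Tx^*)]$, bound $p(x_{n+1},Tx^*)=p(Tx_n,Tx^*)$ by \eqref{Chat-Kan} with $x=x_n$, $y=x^*$, observe that every rational term on the right has a factor tending to $0$ — the $\lambda_2$ term carries $p(x_n,x_{n+1})\to 0$, the $\lambda_3,\lambda_5$ terms carry $p(x^*,Tx^*)$ multiplied by a factor $p(x_n,x^*)$ or $p(x^*,x_{n+1})\to 0$, and the $\lambda_4$ term carries $p(x_n,x_{n+1})\to 0$ — so that passing $n\to\infty$ gives $p(x^*,Tx^*)\leq (s^2\lambda_1 + \text{small})\,p(x^*,Tx^*)$; one must confirm the surviving coefficient is $<1$, which again is exactly what the constraint $\lambda_1+\lambda_2+2s\lambda_3+s\lambda_4+s\lambda_5<1$ is designed to guarantee after the $s$-weighting from (pm4). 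Hence $p(x^*,Tx^*)=0$, so $Tx^*=x^*$, and by the first paragraph $x^*$ is the unique fixed point with $p(x^*,x^*)=0$.

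The main obstacle I anticipate is the precise bookkeeping of the $s$-coefficients when expanding the mixed numerators via (pm4): each application of the relaxed triangle inequality introduces a factor of $s$, and several of the five rational terms contain $p(x,Ty)$ or $p(y,Tx)$ which must be expanded, so one has to track exactly where the $2s$, $s$, $s$ weights on $\lambda_3,\lambda_4,\lambda_5$ in the hypothesis come from — the factor $2s$ on $\lambda_3$ presumably reflecting that $p(x,Ty)$ needs a two-step expansion — and confirm that the final coefficient multiplying $b_n$ (in the decreasing-sequence step) and multiplying $p(x^*,Tx^*)$ (in the fixed-point step) is strictly below $1$. A secondary subtlety is ensuring the denominators $1+p(\cdot,\cdot)$ are never exploited in the "wrong direction"; since they are always $\geq 1$, bounding each quotient by its numerator is safe, and the calibrated coefficient sum absorbs the resulting slack.
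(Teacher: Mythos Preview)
Your overall strategy matches the paper's---Picard iteration, show $b_n=p(x_n,x_{n+1})$ decays geometrically, conclude $0$-Cauchy, identify the limit as the fixed point, then uniqueness---but there are two concrete gaps in the bookkeeping that the paper resolves with devices you do not mention.

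First, the \emph{symmetrization trick}. Applying \eqref{Chat-Kan} only with $(x,y)=(x_{n-1},x_n)$ and bounding each rational term as you describe yields
\[
(1-\lambda_2-s\lambda_3-s\lambda_4-\lambda_5)\,b_n\le(\lambda_1+s\lambda_3+s\lambda_4)\,b_{n-1},
\]
which gives a contraction constant $<1$ only under the condition $\lambda_1+\lambda_2+2s\lambda_3+2s\lambda_4+\lambda_5<1$---\emph{not} the stated hypothesis (take $s=1$, $\lambda_4=0.6$, all others $0$: the hypothesis holds but your one-sided estimate gives no contraction). The paper additionally applies \eqref{Chat-Kan} with the roles of $x$ and $y$ interchanged, using $p(x_{n+1},x_n)=p(Tx_n,Tx_{n-1})$, to obtain a companion inequality in which $\lambda_4$ and $\lambda_5$ are swapped; \emph{adding} the two inequalities produces
\[
b_n\le\frac{2\lambda_1+2s\lambda_3+s\lambda_4+s\lambda_5}{2-2\lambda_2-2s\lambda_3-s\lambda_4-s\lambda_5}\,b_{n-1}=:\mu\,b_{n-1},
\]
and now $\mu<1$ is exactly equivalent to the stated hypothesis. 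Your guess that the weight $2s$ on $\lambda_3$ reflects a ``two-step expansion'' is off: it arises because $p(x_{n-1},x_{n+1})\le s(b_{n-1}+b_n)$ contributes $s\lambda_3$ to \emph{both} sides, while the asymmetric weights on $\lambda_4,\lambda_5$ are reconciled only by the symmetrization-and-add step.

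Second, the $0$-Cauchy step. You assert that either $s\mu<1$ or that ``$\mu$ can be taken $<1/s$ from the coefficient constraint''; this is false in general (e.g.\ $s=10$, $\lambda_1=0.9$, others $0$ gives $\mu=0.9$ and $s\mu=9$). The paper isolates this issue as a separate lemma (Lemma~\ref{lemma2.1}): from $p(x_n,x_{n+1})\le\lambda\,p(x_{n-1},x_n)$ with any $\lambda<1$ one deduces that $(x_n)$ is $0$-Cauchy, regardless of whether $s\lambda<1$. The idea is that since $\lambda^{n_0}<1/s$ for some $n_0$, the tail of the sequence satisfies the telescoped estimate with effective ratio $s\lambda^{n_0}<1$. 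Without this lemma (or an equivalent argument) your sum $\sum_k s^k\mu^{n+k}$ diverges and the Cauchy claim fails.

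Your treatment of $p(u,u)=0$, uniqueness, and the final step $Tx^*=x^*$ is essentially the same as the paper's.
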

In proving this theorem, we shall need the following lemma.

\begin{lemma}\label{lemma2.1}
	Let $(X, p)$ be a partial $b$-metric space with coefficient
	$s \geq  1$ and $T:X\to X$ be a self mapping. Suppose that $(x_n)$ is a sequence in $X$ constructed as $x_{n+1} = Tx_n$ and such that 
	
	\[ p(x_n,x_{n+1}) \leq \lambda p(x_{n-1},x_n),\]
	for all $n\in \mathbb{N},$ where $\lambda\in [0,1)$ is a constant. Then $(x_n)$ is a $0$-Cauchy
	sequence.
\end{lemma}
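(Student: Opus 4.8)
The plan is to show that $(x_n)$ is $0$-Cauchy by iterating the given inequality and then using the relaxed triangle inequality (pm4) together with the convergence of a geometric series. First I would iterate $p(x_n,x_{n+1}) \leq \lambda p(x_{n-1},x_n)$ down to $p(x_n,x_{n+1}) \leq \lambda^n p(x_0,x_1)$, which in particular forces $p(x_n,x_{n+1}) \to 0$. Next, for $m > n$ I would expand $p(x_n,x_m)$ by repeatedly applying (pm4) in the form $p(x,y) \leq s[p(x,z) + p(z,y)] - p(z,z) \leq s[p(x,z)+p(z,y)]$, peeling off one term $x_n, x_{n+1}, \dots$ at a time, exactly as was done in the proof of Theorem \ref{main}. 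This yields a bound of the shape
\[
p(x_n,x_m) \leq s\,p(x_n,x_{n+1}) + s^2 p(x_{n+1},x_{n+2}) + \cdots + s^{m-n}p(x_{m-1},x_m).
\]

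Then I would substitute $p(x_{n+k},x_{n+k+1}) \leq \lambda^{n+k} p(x_0,x_1)$ into this estimate, factor out $\lambda^n p(x_0,x_1)$, and recognize the remaining sum as dominated by the geometric series $\sum_{k\geq 0} (s\lambda)^k$. The delicate point here is that this series converges only when $s\lambda < 1$, so I should remark that the hypothesis $\lambda \in [0,1)$ is being used in the stronger implicit form that arises in the applications (in Theorem \ref{main} one has $\mu = \lambda/(1-\lambda)$ with $s\mu<1$; in Theorem \ref{main22}, Case 1, $s\lambda<1$ holds since $\lambda<1/s$), so in each invocation of the lemma the constant $\lambda$ will satisfy $s\lambda<1$. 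Granting this, I obtain
\[
p(x_n,x_m) \leq \frac{s\lambda^n}{1-s\lambda}\,p(x_0,x_1),
\]
and letting $n,m \to \infty$ gives $\lim_{n,m\to\infty} p(x_n,x_m) = 0$, i.e. $(x_n)$ is $0$-Cauchy.

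The main obstacle — and the one subtlety worth flagging — is precisely the gap between the stated hypothesis $\lambda\in[0,1)$ and what the argument actually needs, namely $s\lambda<1$; without the latter the telescoped sum $\sum s^k\lambda^{n+k}$ diverges and no conclusion follows. I would therefore either strengthen the lemma's hypothesis to $s\lambda<1$ or note explicitly that in every application the relevant contraction constant lies in $[0,1/s)$. Apart from this, the proof is entirely routine: the expansion via (pm4) and the geometric-series summation are the same computations already carried out in the proof of Theorem \ref{main}, so no new ideas are required beyond isolating them as a reusable lemma.
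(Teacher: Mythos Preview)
Your argument reproduces exactly the paper's Case~1 ($\lambda\in[0,1/s)$, $s>1$), and your telescoping via (pm4) followed by the geometric-series bound
\[
p(x_n,x_m)\le \frac{s\lambda^n}{1-s\lambda}\,p(x_0,x_1)
\]
is the same computation the paper carries out. Where you diverge from the paper is in how you handle the range $\lambda\in[1/s,1)$: you propose either to strengthen the hypothesis to $s\lambda<1$ or to argue that every application already has $\lambda<1/s$. The paper does neither; it proves the lemma as stated, with a separate Case~2.

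Your proposed workaround is not available. In the main application, Theorem~\ref{main2}, the constant produced is
\[
\lambda=\frac{2\lambda_1+2s\lambda_3+s\lambda_4+s\lambda_5}{2-2\lambda_2-2s\lambda_3-s\lambda_4-s\lambda_5},
\]
and the hypothesis $\lambda_1+\lambda_2+2s\lambda_3+s\lambda_4+s\lambda_5<1$ forces only $\lambda<1$, not $\lambda<1/s$. For instance, with $s=2$, $\lambda_1=0.9$ and $\lambda_2=\lambda_3=\lambda_4=\lambda_5=0$ one gets $\lambda=0.9>1/2=1/s$. So the lemma must cover the full range $\lambda\in[0,1)$, and your proof has a genuine gap there.

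The paper's device for Case~2 is the missing idea: since $\lambda<1$ one can choose $n_0\in\mathbb{N}$ with $\lambda^{n_0}<1/s$, and then run the Case~1 estimate with $\lambda^{n_0}$ playing the role of the contraction constant (grouping the telescoped sum in blocks of length $n_0$, or equivalently regarding the sequence as the Picard iteration of $T^{n_0}$). This reduces Case~2 to Case~1 and yields $\lim_{n,m\to\infty}p(x_n,x_m)=0$ without any extra hypothesis. The paper's write-up of this step is terse, but that is the mechanism you are missing.
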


\begin{proof}
	Let $x_0\in X$ and construct a Picard iterative sequence $(x_n)$ by $x_{n+1} = T x_n, (n \in \mathbb{N})$. We distinguish the following three cases.
	
	\begin{enumerate}
		\item[Case 1.] $\lambda \in [0,\frac{1}{s}) \ (s>1)$. By $p(x_n,x_{n+1}) \leq \lambda p(x_{n-1},x_n)$, we have $p(x_,x_{n+1})\leq \lambda^n p(x_0,x_1).$ Thus, for any $n > m$ and $n, m \in \mathbb{N}$, we have, by following the proof of Theorem \ref{main}
		
		\begin{align*}
			p(x_m,x_n) & \leq s[p(x_m,x_{m+1}) + p(x_{m+1},x_n)]-p(x_{m+1},x_{m+1})  \\
			            & \leq s[p(x_m,x_{m+1}) + p(x_{m+1},x_n)\\
			            & \leq  p(x_m,x_{m+1}) + s^2 p(x_{m+1},x_{m+2}) +s^3 [p(x_{m+2},x_{m+3})+ p(x_{m+3},x_{n})] \\
			            &  \vdots \\
			            & \leq s \lambda^m(1+s\lambda + s^2\lambda^2)+\cdots + s^{n-m-1}\lambda^{n-m-1})p(x_0,x_1)\\
			            & \leq s\lambda^m \left[ \sum_{i=0}^{\infty}(s\lambda)^i \right]p(x_0,x_1)\\
			            & = \frac{s\lambda^m}{1-s\lambda}p(x_0,x_1) \to 0 \ \ (m\to \infty),
		\end{align*}
		
		which implies that $(x_n)$ is a $0$-Cauchy sequence.

		\item[Case 2.] Let $\lambda \in [\frac{1}{s},1) \ (s>1)$. In this case, we have $\lambda^n \to 0$ as $n\to \infty$. So there is $n_o \in \mathbb{N}$ such that $\lambda^{n_o} < \frac{1}{s}$. Thus, by Case 1, we claim that
		
		\[\{ (T^{n_o})x_0\}_{n\geq 1} := \{x_{n_o},x_{n_o+1},\cdots,x_{n_o+n},\cdots \}  \]
		
		is a $0$-Cauchy sequence. Then $(x_n)$ is a $0$-Cauchy sequence.

	\item[Case 3.]	Let $s = 1$. Similar to the process of Case 1, the claim holds.
		
	\end{enumerate}
	
	\end{proof}

Now, we prove the Theorem \ref{main2}.
\begin{proof}
	Choose $x_0\in X$ and construct a Picard iterative sequence $(x_n)$ by $x_{n+1} = T x_n, (n \in \mathbb{N})$. If there exists $n_o \in \mathbb{N}$ such that $x_{n_o} = x_{n_o+1}$ , then $x_{n_o} = x_{n_o+1} = Tx_{n_o}$ i.e. $x_{n_o}$ is a fixed point of $T$. Next, without loss of
	generality, let $x_n \neq x_{n+1}$ for all $n \in \mathbb{N}.$ By \eqref{Chat-Kan}, we have
	
\begin{align*}
p(x_n , x_{n+1} )
&= (T x_{n-1} , T x_n )\\
& \leq \lambda_1 p(x_{n-1},x_{n}) + \lambda_2 \frac{p(x_{n-1},Tx_{n-1})p(x_n,Tx_n)}{1+p(x_{n-1},x_{n})}\\
&+ \lambda_3 \frac{p(x_{n-1},Tx_{n})p(x_n,Tx_{n-1})}{1+p(x_{n-1},x_{n})} + \lambda_4 \frac{p(x_{n-1},Tx_{n-1})p(x_{n-1},Tx_n)}{1+p(x_{n-1},x_{n})}\\
& + \lambda_5\frac{p(x_n,Tx_n)p(x_n,Tx_{n-1})}{1+p(x_{n-1},x_{n})}\\
& = \lambda_1 p(x_{n-1},x_{n}) + \lambda_2 \frac{p(x_{n-1},x_{n})p(x_n,x_{n+1})}{1+p(x_{n-1},x_{n})}
 + \lambda_3 \frac{p(x_{n-1},x_{n+1})p(x_n,x_{n})}{1+p(x_{n-1},x_{n})}\\
 & + \lambda_4 \frac{p(x_{n-1},x_{n})p(x_{n-1},x_{n+1})}{1+p(x_{n-1},x_{n})} + \lambda_5\frac{p(x_n,x_{n+1})p(x_n,x_{n})}{1+p(x_{n-1},x_{n})}.
\end{align*}	

In view of axioms (pm2) and (pm4), we have

\begin{align*}  \lambda_3 \frac{p(x_{n-1},x_{n+1})p(x_n,x_{n})}{1+p(x_{n-1},x_{n})} & \leq \lambda_3\frac{p(x_{n-1},x_{n+1})p(x_n,x_{n})}{p(x_{n-1},x_{n})} \\
&\leq \lambda_3 p(x_{n-1},x_{n+1})\\
&\leq s\lambda_3[p(x_{n-1},x_n)+ p(x_n,x_{n+1})],
\end{align*}
i.e.

\[ \frac{p(x_{n-1},x_{n+1})p(x_n,x_{n})}{1+p(x_{n-1},x_{n})} \leq s\lambda_3[p(x_{n-1},x_n)+ p(x_n,x_{n+1})]. \]

We also have

\begin{align*}
\lambda_4 \frac{p(x_{n-1},x_{n})p(x_{n-1},x_{n+1})}{1+p(x_{n-1},x_{n})} &\leq \lambda_4 \frac{p(x_{n-1},x_{n})p(x_{n-1},x_{n+1})}{p(x_{n-1},x_{n})}\\
& \leq \lambda_4 p(x_{n-1},x_{n+1})\\
&\leq s\lambda_4[p(x_{n-1},x_n)+ p(x_n,x_{n+1})],
\end{align*}

i.e.

\[ \lambda_4 \frac{p(x_{n-1},x_{n})p(x_{n-1},x_{n+1})}{1+p(x_{n-1},x_{n})}  \leq s\lambda_4[p(x_{n-1},x_n)+ p(x_n,x_{n+1})]. \]

Hence 

\begin{align*}
p(x_n , x_{n+1} )
= (T x_{n-1} , T x_n )&\leq \lambda_1p(x_{n-1},x_n)+\lambda_2p(x_n,x_{n+1})+ s\lambda_3[p(x_{n-1},x_n)+ p(x_n,x_{n+1})] \\
 &+ s\lambda_4[p(x_{n-1},x_n)+ p(x_n,x_{n+1})]+\lambda_5 p(x_n,x_{n+1}).
\end{align*}

It follows that 

\begin{equation}\label{eq2.3}
(1-\lambda_2-s\lambda_3-s\lambda_4)p(x_n,x_{n+1}) \leq (\lambda_1+s\lambda_3+s\lambda_4)p(x_{n-1},x_n).
\end{equation}

Again, by \eqref{Chat-Kan}, and exploiting the symmetry of $p$, i.e. $p(x_n,x_{n+1}) = p(Tx_n,Tx_{n-1}) $, we are led to 

\begin{equation}\label{eq2.4}
	(1-\lambda_2-s\lambda_3-s\lambda_5)p(x_n,x_{n+1}) \leq (\lambda_1+s\lambda_3+s\lambda_5) p(x_{n-1},x_n)
\end{equation}

Adding up \eqref{eq2.3} and \eqref{eq2.4} yields

\[ p(x_n,x_{n+1}) \leq \frac{2\lambda_1+2s\lambda_3+s\lambda_4+s\lambda_5}{2-2\lambda_2-2s\lambda_3-s\lambda_4-s\lambda_5} p(x_{n-1},x_n) \]

Put $\lambda =\frac{2\lambda_1+2s\lambda_3+s\lambda_4+s\lambda_5}{2-2\lambda_2-2s\lambda_3-s\lambda_4-s\lambda_5} $. In view of $\lambda_1+\lambda_2+2s\lambda_3+s\lambda_4+s\lambda_5 <1,$ then $0\leq \lambda<1.$

Thus, by Lemma \ref{lemma2.1}, $(x_n)$ is a $0$-Cauchy sequence in $X$. Since
$(X, p)$ is $0$-complete, then there exists some point $x* \in X$ such that:

$$ \lim\limits_{n,m\to \infty} p(x_n , x_m)= \lim\limits_{n\to \infty} p(x_n , x^*)=p(x^*,x^*)=0.$$

 By \eqref{Chat-Kan}, it is easy to see that
 
 \begin{align*}
 	p(x_{n+1},Tx^*) & = p(Tx_n,Tx^*)\\
 	                & \leq \lambda_1p(x_n,x^*) + \lambda_2 \frac{p(x_n,x_{n+1})p(x^*,Tx^*)}{1+p(x_n,x^*)} \\
 	                & + \lambda_3 \frac{p(x_n,Tx^*)p(x^*,x_{n+1})}{1+p(x_n,x^*)} + \lambda_4 \frac{p(x_n,x_{n+1})p(x_n,Tx^*)}{1+p(x_n,x^*)}\\
 	                & + \lambda_5 \frac{p(x^*,Tx^*)p(x^*,x_{n+1})}{1+p(x_n,x^*)}
 \end{align*}

Taking the limit as $n \to \infty$, we get $\lim\limits_{n\to \infty}p(x_{n+1},Tx^*) =0$

On another side, 

\[ p(x^*,Tx^*) \leq s[p(x^*,x_{n+1})+ p(x_{n+1},Tx^*)]-p(x_{n+1},x_{n+1})   \]

Taking the limit on both sides as $n \to \infty$, we get

\[  p(x^*,Tx^*) = 0.\]

It gives that $Tx^* = x^*$ . In other words, $x^*$ is a fixed
point of $T$.

For uniqueness of the fixed point, assume $y^*$ is another fixed point of $T$, then by \eqref{Chat-Kan}, it is easy to check that 

\begin{align*}
p(x^*,y^*) &= p(Tx^*,Ty^*)\\
           & \leq \lambda_1p(x^*,y^*) + \lambda_3 p(x^*,y^*) \\
           & = (\lambda_1+\lambda_3)p(x^*,y^*).
\end{align*}
Because $\lambda_1+\lambda_2+2s\lambda_3+s\lambda_4+s\lambda_5 <1$ implies $\lambda_1+\lambda_3<1,$ we conclude that $x^*=y^*$ since $p(x^*,y^*)=0.$

\end{proof}

\begin{corollary}\label{cor1}
	 Let $(X, p)$ be a complete partial metric space with coefficient
	 $s \geq  1$ and $T:X\to X$ be a self mapping satisfying the following condition:
	 \begin{align*}\tag{Ch-Ka}\label{Chat-Kan}
	 p(Tx,Ty)&\leq \lambda_1 p(x,y) +\lambda_2 \frac{p(x,Tx)p(y,Ty)}{1+p(x,y)} +\lambda_3 \frac{p(x, T y)p(y, T x)}{1+p(x,y)} \nonumber \\
	 &+ \lambda_4 \frac{p(x, T x)p(x, T y)}{1+p(x,y)}+\lambda_5 \frac{p(y, T y)p(y, T x)}{1+p(x,y)}. \nonumber \\
	 \end{align*}
	 
	 for all $x,y \in X$, where $\lambda_1,\lambda_2,\lambda_3,\lambda_4,\lambda_5$ are nonnegative real numbers
	 satisfying:
	 
	 $\lambda_1+\lambda_2+\lambda_3+\lambda_4+\lambda_5 <1.$ Then $T$ has a unique fixed point in $X$.
	 
\end{corollary}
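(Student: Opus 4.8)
The plan is to read Corollary \ref{cor1} as the specialisation of Theorem \ref{main2} to coefficient $s=1$, supplemented by two elementary observations, and then to repeat the proof of Theorem \ref{main2} almost verbatim. First I would record that a partial metric space is precisely a partial $b$-metric space in which (pm4) holds with $s=1$, so Lemma \ref{lemma2.1} and the machinery of Theorem \ref{main2} apply. Secondly, completeness implies $0$-completeness here: a $0$-Cauchy sequence $(x_n)$ satisfies $\lim_{n,m\to\infty}p(x_n,x_m)=0$, hence is $p$-Cauchy, and completeness yields $x^*\in X$ with $\lim_n p(x_n,x^*)=\lim_{n,m\to\infty}p(x_n,x_m)=p(x^*,x^*)$; since the middle quantity is $0$, all three vanish, which is exactly $0$-completeness.

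Next I would rebuild the Picard recursion with $s=1$. Fix $x_0\in X$, set $x_{n+1}=Tx_n$, and assume $x_n\neq x_{n+1}$ for every $n$ (otherwise some $x_n$ is already a fixed point), so that $p(x_{n-1},x_n)>0$. Feeding the pair $(x_{n-1},x_n)$ into the contraction hypothesis, estimating each of the five quotients by discarding its denominator together with one numerator factor (legitimate because $1+p(x_{n-1},x_n)\ge 1$ and, by (pm2), $p(x_n,x_n)\le p(x_{n-1},x_n)$), and then invoking the $s=1$ case of (pm4) in the form $p(x_{n-1},x_{n+1})\le p(x_{n-1},x_n)+p(x_n,x_{n+1})-p(x_n,x_n)$, leads to an inequality of the shape
\[
\bigl(1-\lambda_2-\lambda_3-\lambda_4-\lambda_5\bigr)\,p(x_n,x_{n+1})\ \le\ \bigl(\lambda_1+\lambda_3+\lambda_4\bigr)\,p(x_{n-1},x_n),
\]
together with the companion inequality obtained from the symmetry $p(x_n,x_{n+1})=p(Tx_n,Tx_{n-1})$, in which $\lambda_4$ and $\lambda_5$ are interchanged. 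Hence $p(x_n,x_{n+1})\le\mu\,p(x_{n-1},x_n)$ for a constant $\mu$ depending only on $\lambda_1,\dots,\lambda_5$, and Lemma \ref{lemma2.1} (case $s=1$) then gives that $(x_n)$ is $0$-Cauchy; by the second observation it $0$-converges to some $x^*$ with $p(x^*,x^*)=0$.

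The remaining steps I would copy from the proof of Theorem \ref{main2}. To show $Tx^*=x^*$, substitute $x=x_n$, $y=x^*$ in the contraction inequality and let $n\to\infty$ to get $p(x_{n+1},Tx^*)\to 0$; then use $p(x^*,Tx^*)\le p(x^*,x_{n+1})+p(x_{n+1},Tx^*)-p(x_{n+1},x_{n+1})$ and pass to the limit to obtain $p(x^*,Tx^*)=0$, i.e. $Tx^*=x^*$. For uniqueness, if $Ty^*=y^*$ then, as for $x^*$, $p(y^*,y^*)=0$, which annihilates the $\lambda_2$, $\lambda_4$, $\lambda_5$ terms; the surviving quotients lose their denominators and the inequality collapses to $p(x^*,y^*)\le(\lambda_1+\lambda_3)\,p(x^*,y^*)$, so $\lambda_1+\lambda_3<1$ forces $p(x^*,y^*)=0$, hence $x^*=y^*$.

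The delicate point, and the part I expect to be the main obstacle, is verifying $0\le\mu<1$ under the hypothesis $\lambda_1+\lambda_2+\lambda_3+\lambda_4+\lambda_5<1$. Taking $s=1$ in the constant produced in the proof of Theorem \ref{main2}, namely $\mu=\dfrac{2\lambda_1+2s\lambda_3+s\lambda_4+s\lambda_5}{\,2-2\lambda_2-2s\lambda_3-s\lambda_4-s\lambda_5\,}$, the requirement $\mu<1$ rewrites as $\lambda_1+\lambda_2+2\lambda_3+\lambda_4+\lambda_5<1$, which is slightly stronger than $\sum_i\lambda_i<1$; the extra $\lambda_3$ is exactly the residue of bounding $p(x_{n-1},x_{n+1})$ through the triangle inequality. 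Closing that gap is the one place the argument is not a purely mechanical corollary: one must either interpret the hypothesis of Corollary \ref{cor1} in that sharper form, or sharpen the estimate of the $\lambda_3$-quotient in the partial-metric setting---for instance by retaining, rather than discarding, the $-p(x_n,x_n)$ term available in (pm4) when $s=1$. Once $\mu<1$ is secured, positivity of the denominator $1-\lambda_2-\lambda_3-\lambda_4-\lambda_5$ is automatic, and every other step transfers without change.
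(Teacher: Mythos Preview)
Your approach is exactly the paper's: the paper's entire proof of Corollary~\ref{cor1} is the single sentence ``Take $s=1$ in Theorem~\ref{main2}, thus the claim holds.'' You have reproduced that reduction, and moreover supplied two ingredients the paper leaves implicit, namely that a partial metric space is a partial $b$-metric space with $s=1$, and that completeness implies $0$-completeness (needed because Theorem~\ref{main2} assumes $0$-completeness while the corollary assumes completeness).

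The ``delicate point'' you flag is real, and it is not addressed by the paper either. Setting $s=1$ in the hypothesis of Theorem~\ref{main2} gives $\lambda_1+\lambda_2+2\lambda_3+\lambda_4+\lambda_5<1$, which is strictly stronger than the $\sum_i\lambda_i<1$ stated in Corollary~\ref{cor1}; equivalently, the contraction constant $\mu$ from the proof of Theorem~\ref{main2} satisfies $\mu<1$ only under the sharper condition. The paper's one-line proof does not close this gap, so your diagnosis that either the hypothesis of the corollary should be read as $\lambda_1+\lambda_2+2\lambda_3+\lambda_4+\lambda_5<1$, or an additional argument specific to the $s=1$ case is required, is entirely correct. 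In short: your proof follows the paper's route but is more careful than the paper itself, and the obstacle you isolate appears to be a genuine oversight in the stated corollary rather than a defect in your argument.
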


\begin{proof}
	Take $s = 1$ in Theorem \ref{main2}, thus the claim holds.
\end{proof}

\begin{remark}
	Take $\lambda_2 = \lambda_3 = \lambda_4 = \lambda_5=0$ in Theorem \ref{main2} or in Corollary \ref{cor1},
	then Theorem \ref{main2} and Corollary \ref{cor1} are reduced to \cite[Theorem 2.4]{shu} and Banach contraction principle, respectively. From this point of view, our results
	are genuine generalizations of the previous results.
\end{remark}

Recently, Qing and Rhoades \cite{rhodes} established the notion of $T$-stability of
Picard's iteration in metric space. In the following, we modify their definition and introduce the concept
of $T$-stability of Picard's iteration in partial $b$-metric space.

\begin{definition}
	Let $(X, p)$ be a partial $b$-metric space, $x_0 \in X$ and $T : X \to X$ be a
	mapping with $F (T ) \neq \emptyset$, where $F (T )$ denotes the set of all fixed points of $T$. Then Picard's iteration $x_{n+1} = T x_n$ is said to be $T$-stable with respect to $T$ if $x_n \overset{p}{\longrightarrow} q, \ q \in F (T )$ and whenever $(y_n)$ is a sequence in $X$ with $\lim\limits_{n\to \infty} p(y_{n+1},Ty_n)=0$, we have $y_n \overset{p}{\longrightarrow} q$.
\end{definition}

What follows is a useful lemma for the proof of our main result in this section.

\begin{lemma}\cite{liu}\label{useful}
	Let $(a_n)$, $(c_n)$ be nonnnegative sequences satisfying $a_{n+1} \leq ha_n + c_n$ for all $n\in \mathbb{N}, \ 0\leq h<1, \ \lim\limits_{n\to \infty} c_n =0$. Then $\lim\limits_{n\to \infty} a_n =0$. 
\end{lemma}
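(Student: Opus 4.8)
The plan is to reduce the statement to two elementary facts about real sequences: that a sequence dominated by an affine recursion with contractive coefficient is bounded, and that one may pass to the $\limsup$ in such a recursion. First I would note that $(c_n)$, being convergent, is bounded, say $c_n\le C$ for all $n$; then $a_{n+1}\le h a_n + C$ gives, by an immediate induction, $a_n\le h^{\,n}a_0+\frac{C}{1-h}$, so $(a_n)$ is bounded and $L:=\limsup_{n\to\infty}a_n$ is a finite, nonnegative number.

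Next I would take $\limsup$ on both sides of $a_{n+1}\le h a_n + c_n$. Using $\limsup_n (h a_n)=h\,\limsup_n a_n$ (valid since $h\ge 0$), $\limsup_n c_n=\lim_n c_n=0$, and subadditivity of $\limsup$, one obtains $L\le hL$, i.e. $(1-h)L\le 0$. Since $1-h>0$ this forces $L\le 0$; combined with $a_n\ge 0$ for all $n$, this yields $\limsup_n a_n=0=\liminf_n a_n$, hence $\lim_{n\to\infty}a_n=0$.

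If one prefers an argument that avoids properties of $\limsup$, I would instead unfold the recursion to $a_{n}\le h^{\,n}a_0+\sum_{k=0}^{n-1}h^{\,n-1-k}c_k$ and treat the convolution term by an $\varepsilon$-split: given $\varepsilon>0$, pick $N$ with $c_k<(1-h)\varepsilon$ for $k\ge N$, bound the tail $\sum_{k=N}^{n-1}h^{\,n-1-k}c_k$ by $(1-h)\varepsilon\sum_{j\ge 0}h^{j}=\varepsilon$, and observe that the finite head $\sum_{k=0}^{N-1}h^{\,n-1-k}c_k$ is $O(h^{\,n-N})\to 0$, so it is eventually $<\varepsilon$; letting $\varepsilon\to 0$ finishes the proof. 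There is no genuine obstacle here; the only point that needs a moment's care is establishing boundedness of $(a_n)$ first (equivalently, finiteness of $L$), after which the conclusion is immediate.
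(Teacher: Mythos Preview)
Your argument is correct: establishing boundedness via the geometric bound and then passing to the $\limsup$ (or, alternatively, the explicit convolution estimate with an $\varepsilon$-split) cleanly yields $\lim_n a_n=0$. Note, however, that the paper does not supply its own proof of this lemma; it merely quotes the result from the reference \cite{liu}, so there is nothing in the paper to compare your approach against.
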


Now we state our main result on $T$-stability.

\begin{theorem}\label{main3}
	Under the conditions of Theorem \ref{main2}, if $2s\lambda_1 + 2\lambda_3 + (s +
	s^2 )(\lambda_4 + \lambda_5 ) < 2$, then Picard's iteration is $T$-stable.
\end{theorem}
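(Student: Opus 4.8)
The plan is to reduce the statement to Lemma \ref{useful}. By Theorem \ref{main2}, $T$ has a unique fixed point $u$ with $Tu=u$ and $p(u,u)=0$, and the Picard iterates $x_{n+1}=Tx_n$ satisfy $x_n\overset{p}{\longrightarrow}u$. Hence it only remains to take an arbitrary sequence $(y_n)\subset X$ with $\varepsilon_n:=p(y_{n+1},Ty_n)\to 0$, put $b_n:=p(y_n,u)$, and produce a recursion $b_{n+1}\le h\,b_n+c_n$ with $0\le h<1$ and $c_n\to 0$; then Lemma \ref{useful} forces $b_n\to 0=p(u,u)$, i.e.\ $y_n\overset{p}{\longrightarrow}u$, which is precisely $T$-stability.

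First I would apply the contractive condition \eqref{Chat-Kan} twice, once to the pair $(x,y)=(y_n,u)$ and once to $(x,y)=(u,y_n)$. Because $Tu=u$ and $p(u,u)=0$, the $\lambda_2$-summand vanishes in both instances, the $\lambda_5$-summand vanishes in the first and the $\lambda_4$-summand in the second, and the remaining quotients simplify since $p(y_n,Tu)=p(y_n,u)=b_n$. Averaging the two resulting estimates for $p(Ty_n,u)$ and using $b_n/(1+b_n)\le 1$ yields
\[
p(Ty_n,u)\;\le\;\lambda_1 b_n+\lambda_3\,p(u,Ty_n)+\frac{\lambda_4+\lambda_5}{2}\,p(y_n,Ty_n).
\]

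The decisive point is that $p(u,Ty_n)=p(Ty_n,u)$, so the $\lambda_3$-term may be transposed to the left; in addition, (pm4) with middle point $u$ (and $p(u,u)=0$) gives $p(y_n,Ty_n)\le s\,b_n+s\,p(Ty_n,u)$. Substituting and solving the linear inequality --- legitimate because the hypothesis forces $2\lambda_3+s(\lambda_4+\lambda_5)<2$, so the coefficient on the left stays positive --- produces
\[
p(Ty_n,u)\;\le\;\mu\, b_n,\qquad \mu:=\frac{2\lambda_1+s(\lambda_4+\lambda_5)}{\,2-2\lambda_3-s(\lambda_4+\lambda_5)\,}.
\]
One further application of (pm4), this time with middle point $Ty_n$ and discarding the nonnegative term $-p(Ty_n,Ty_n)$, gives
\[
b_{n+1}=p(y_{n+1},u)\;\le\;s\,p(y_{n+1},Ty_n)+s\,p(Ty_n,u)\;\le\;s\mu\,b_n+s\varepsilon_n .
\]
A short computation shows that $s\mu<1$ is equivalent to $2s\lambda_1+2\lambda_3+(s+s^2)(\lambda_4+\lambda_5)<2$, which is exactly the extra hypothesis; thus Lemma \ref{useful} applies with $a_n=b_n$, $h=s\mu$, $c_n=s\varepsilon_n$, and $b_n\to 0$.

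I expect the only real difficulty to lie in this last piece of bookkeeping. One must route every triangle-type estimate through the fixed point $u$, where (pm4) is lossless thanks to $p(u,u)=0$, and absorb the $\lambda_3$-term on the left rather than estimating $p(u,Ty_n)$ and $p(y_n,Ty_n)$ through $b_{n+1}$; the cruder route would force a more restrictive condition on the $\lambda_i$, so matching the sharp threshold $2s\lambda_1+2\lambda_3+(s+s^2)(\lambda_4+\lambda_5)<2$ requires the careful arrangement above. Everything else is routine manipulation of the coefficient $s$.
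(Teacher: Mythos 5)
Your proposal is correct and follows essentially the same route as the paper's proof: apply \eqref{Chat-Kan} to both orderings of $(y_n,u)$, exploit $Tu=u$ and $p(u,u)=0$ to kill the $\lambda_2$ and one of the $\lambda_4,\lambda_5$ terms in each, combine the two inequalities, absorb the $\lambda_3$-term on the left and bound $p(y_n,Ty_n)$ via (pm4) through $u$, arriving at the same coefficient $\mu=\frac{2\lambda_1+s(\lambda_4+\lambda_5)}{2-2\lambda_3-s(\lambda_4+\lambda_5)}$ before invoking Lemma \ref{useful} with $h=s\mu$. The only cosmetic difference is that you average the two estimates before substituting the (pm4) bound, whereas the paper substitutes first and then adds; the outcome is identical.
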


\begin{proof}
	From Theorem \ref{main2}, we know that $T$ has a unique fixed point $x^* \in X$ and $p(x^*,x^*)=0$. Assume that $(y_n)$ is a sequence in $X$ with $\lim\limits_{n\to \infty} p(y_{n+1},Ty_n)=0$. 
	Taking advantage of \eqref{Chat-Kan}, on the one hand, we have
	
	\begin{align*}
		p(Ty_n,x^*) &= p(Ty_n,Tx^*) \\
		            & \leq \lambda_1 p(y_n,x^*) +\lambda_2 \frac{p(y_n,Ty_n)p(x^*,Tx^*)}{1+p(y_n,x^*)}+\lambda_3 \frac{p(y_n,Tx^*)p(x^*,Ty_n)}{1+p(y_n,x^*)}\\
		            & + \lambda_4 \frac{p(y_n,Ty_n)p(y_n,Tx^*)}{1+p(y_n,x^*)}+ \lambda_5 \frac{p(x^*,Tx^*)p(x^*,Ty_n)}{1+p(y_n,x^*)}\\
		            &\leq \lambda_1 p(y_n,x^*) + \lambda_3 p(x^*,Ty_n)+ \lambda_4p(y-n,Ty_n)\\
		            & \leq (\lambda_1+s\lambda_4)p(y_n,x^*) + (\lambda_3 + s\lambda_4)p(x^*,Ty_n),
	\end{align*}
	
	which means 
	
	\begin{equation}\label{eq2.8}
	 (1-\lambda_3 - s\lambda_4) p(Ty_n,x^*) \leq (\lambda_1+s\lambda_4) p(y_n,x^*). 
 \end{equation}

	On the other hand, owing to the symmetry of $p$, we have
	
	$$p(Ty_n,x^*) = p(Tx^*,Tx_n),$$
	
	which yields
	
	\begin{equation}\label{eq2.9}
		1-\lambda_3 - s\lambda_5) p(Ty_n,x^*) \leq (\lambda_1+s\lambda_5) p(y_n,x^*).
			\end{equation}
	
Combining \eqref{eq2.8} and \eqref{eq2.9}, we get 

\[ (2 - 2\lambda_3 - s\lambda_4 - s\lambda_5 )p(x^*,Ty_n) \leq  (2\lambda_1 + s\lambda_4 + s\lambda_5 ) p(x^*,y_n), \]	
	
leading to 

\begin{equation}\label{eq2.10}
p(x^*,Ty_n)\leq \frac{2\lambda_1 + s\lambda_4 + s\lambda_5}{2 - 2\lambda_3 - s\lambda_4 - s\lambda_5}p(x^*,y_n). \end{equation}	
	
	If we set $l = {s(2\lambda_1 + s\lambda_4 + s\lambda_5)}{2 - 2\lambda_3 - s\lambda_4 - s\lambda_5}$, it follows from $2s\lambda_1 + 2\lambda_3 + (s +
	s^2 )(\lambda_4 + \lambda_5 ) < 2$ that $0\leq l<1$.
	
In view of Lemma \ref{useful}, set $a_n = p(y_n,x^*), \ c_n = sp(y_{n+1},Ty_n)$, and owing to \eqref{eq2.10}, we have

\[a_{n+1}= p(y_{n+1},x^*) \leq s[p(y_{n+1},Ty_n) + p(Ty_n,x^*)]\leq ha_n +c_n. \]	

Thus, $\lim\limits_{n\to \infty} p(y_n,x^*)=0=p(x^*,x^*)$, i.e. have $y_n \overset{p}{\longrightarrow} x^*$. As a consequence, Picard's iteration is $T$-stable.
	\end{proof}

\begin{corollary}\label{cor2}
	Under the conditions of Corollary \ref{cor1}, Picard's iteration is $T$-stable.
\end{corollary}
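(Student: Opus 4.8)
The plan is to reduce the statement to Theorem \ref{main3} by specialising the coefficient to $s = 1$. First I would observe that Corollary \ref{cor1} is exactly Theorem \ref{main2} with $s = 1$: a complete partial metric space is in particular a $0$-complete partial $b$-metric space with coefficient $s=1$, since every $0$-Cauchy sequence is a $p$-Cauchy sequence and hence converges in the required sense. Consequently, under the hypotheses of Corollary \ref{cor1} the map $T$ has a unique fixed point $x^*$ with $p(x^*,x^*)=0$, which is precisely what is needed for Picard's iteration $x_{n+1}=Tx_n$ to converge to $x^*\in F(T)$ and for the notion of $T$-stability to be meaningful here.

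Next I would check that the extra hypothesis of Theorem \ref{main3}, namely $2s\lambda_1 + 2\lambda_3 + (s+s^2)(\lambda_4+\lambda_5) < 2$, holds automatically in this setting. Substituting $s=1$ it becomes $2\lambda_1 + 2\lambda_3 + 2(\lambda_4+\lambda_5) < 2$, i.e. $\lambda_1 + \lambda_3 + \lambda_4 + \lambda_5 < 1$. Since all the $\lambda_i$ are nonnegative, this is an immediate consequence of the standing constraint $\lambda_1 + \lambda_2 + \lambda_3 + \lambda_4 + \lambda_5 < 1$ from Corollary \ref{cor1} (simply discard the nonnegative term $\lambda_2$). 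Thus every hypothesis of Theorem \ref{main3} is satisfied with $s = 1$.

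Applying Theorem \ref{main3} then yields the conclusion directly: for any sequence $(y_n)$ in $X$ with $\lim_{n\to\infty} p(y_{n+1},Ty_n)=0$ one has $y_n \overset{p}{\longrightarrow} x^*$, the argument running through the contraction estimate \eqref{eq2.10}, which at $s=1$ gives the ratio $l = \frac{2\lambda_1 + \lambda_4 + \lambda_5}{2 - 2\lambda_3 - \lambda_4 - \lambda_5}\in[0,1)$, together with Lemma \ref{useful} applied to $a_n = p(y_n,x^*)$ and $c_n = p(y_{n+1},Ty_n)$. There is essentially no obstacle in this proof: the only points requiring a word of care are the passage from ``complete'' to ``$0$-complete'' needed to view Corollary \ref{cor1} as an instance of Theorem \ref{main2}, and the trivial verification that the parameter inequality of Theorem \ref{main3} is slack enough at $s=1$ to follow from the basic constraint on the $\lambda_i$.
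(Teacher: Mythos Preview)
Your proposal is correct and follows essentially the same route as the paper: the paper's proof simply observes that Corollary~\ref{cor1} is the special case $s=1$ of Theorem~\ref{main2}, whence Theorem~\ref{main3} applies. Your write-up is more detailed (you explicitly verify that the extra inequality $2s\lambda_1 + 2\lambda_3 + (s+s^2)(\lambda_4+\lambda_5) < 2$ of Theorem~\ref{main3} follows from $\sum_i \lambda_i < 1$ at $s=1$, and you address the ``complete $\Rightarrow$ $0$-complete'' passage), but the underlying argument is the same specialisation.
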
	
	
\begin{proof}
	Just notice that Corollary \ref{cor1} is a special case of Theorem \ref{main2} where we take $s=1$. 
	 
\end{proof}	

\begin{corollary}(\cite[Theorem 1.]{shu})\label{cor2}
Let $(X, p)$ be a complete partial $b$-metric space with coefficient
$s \geq  1$ and $T:X\to X$ be a mapping satisfying the following condition:
\begin{equation}\tag{Ch3}\label{Chat3}
p(Tx,Ty)\leq \lambda  p(x,y),
\end{equation}

for all $x,y \in X$, where $\lambda\in \left[0,1 \right)$. The Picard's iteration is $T$-stable.
\end{corollary}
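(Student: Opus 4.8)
The plan is to derive the existence of a fixed point from Theorem~\ref{BCP} and then to establish the stability assertion directly from~\eqref{Chat3} and Lemma~\ref{useful}, the only delicate point being to absorb the $b$-metric coefficient $s$ by passing to a sufficiently high power of $T$. By Theorem~\ref{BCP}, $T$ has a unique fixed point $q\in X$; putting $x=y=q$ in~\eqref{Chat3} gives $p(q,q)\leq\lambda p(q,q)$, hence $p(q,q)=0$, while iterating~\eqref{Chat3} along the diagonal gives $p(T^mw,T^mw)\leq\lambda^m p(w,w)$ for every $w\in X$ and $m\in\mathbb N$. For any $N\in\mathbb N$, $T^N$ satisfies~\eqref{Chat3} with constant $\lambda^N<1$, so by Theorem~\ref{BCP} again $q$ is also the unique fixed point of $T^N$. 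Now $T$-stability amounts to two claims: (a) $x_n\overset{p}{\longrightarrow}q$, and (b) every sequence $(y_n)$ with $\lim_n p(y_{n+1},Ty_n)=0$ satisfies $y_n\overset{p}{\longrightarrow}q$. Since $p(x_{n+1},Tx_n)=p(x_{n+1},x_{n+1})\leq\lambda^{n+1}p(x_0,x_0)\to0$, claim~(a) is the case $y_n=x_n$ of~(b), so it suffices to prove~(b).

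For~(b) I would set $a_n=p(y_n,q)$ and $b_n=p(y_{n+1},Ty_n)\to0$ and combine~(pm4) (with middle point $Ty_n$) with~\eqref{Chat3} and $Tq=q$ to get
\begin{align*}
a_{n+1}=p(y_{n+1},q)&\leq s\bigl[p(y_{n+1},Ty_n)+p(Ty_n,q)\bigr]-p(Ty_n,Ty_n)\\
&\leq s\,b_n+s\lambda\,a_n.
\end{align*}
If $s\lambda<1$, this is exactly the recursion of Lemma~\ref{useful} with $h=s\lambda$ and $c_n=sb_n\to0$, so $a_n\to0$, i.e. $p(q,y_n)\to0=p(q,q)$ and $y_n\overset{p}{\longrightarrow}q$. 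This already disposes of the metric case $s=1$ and, more generally, of every $\lambda<1/s$.

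The hard part --- in fact the only obstacle --- is that this recursion carries the factor $s\lambda$, which may be $\geq1$ when $\lambda\in[1/s,1)$, so Lemma~\ref{useful} does not apply directly. To get around this I would fix $N$ with $s\lambda^N<1$ and first show, by induction on $j$ using the same step $p(y_{n+j+1},T^{j+1}y_n)\leq s\,b_{n+j}+s\lambda\,p(y_{n+j},T^{j}y_n)$ (obtained from~(pm4) and~\eqref{Chat3} exactly as above), that $\lim_{n\to\infty}p(y_{n+j},T^{j}y_n)=0$ for each fixed $j$, in particular for $j=N$. Hence, for each $r\in\{0,\dots,N-1\}$ the subsequence $w^{(r)}_k:=y_{kN+r}$ satisfies $\lim_{k\to\infty}p\bigl(w^{(r)}_{k+1},T^{N}w^{(r)}_k\bigr)=0$, i.e. it is an approximate Picard sequence for $T^N$, which satisfies~\eqref{Chat3} with constant $\lambda^N$ and $s\lambda^N<1$. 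Applying the previous paragraph's argument to $T^N$ and to each $w^{(r)}$ yields $p\bigl(w^{(r)}_k,q\bigr)\to0$ as $k\to\infty$; since the progressions $\{kN+r\}_{k\geq0}$, $0\leq r<N$, exhaust $\mathbb N$, this is precisely $p(y_n,q)\to0$, i.e. $y_n\overset{p}{\longrightarrow}q$, which is~(b).

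I expect the induction on $j$ and the recombination of the $N$ arithmetic progressions to be entirely routine; the one substantive observation is that the loss of a factor $s$ in the modified triangle inequality~(pm4) forces one to work with a high enough power of $T$ rather than with $T$ itself, and that is the sole place where the hypothesis ``$\lambda<1$'' --- as opposed to ``$s\lambda<1$'' --- is actually used.
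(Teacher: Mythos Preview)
Your argument is correct and in fact proves strictly more than the paper's own proof does. The paper disposes of this corollary in one line: it claims the statement is the special case $\lambda_2=\lambda_3=\lambda_4=\lambda_5=0$ of the earlier $T$-stability result (Theorem~\ref{main3}). But Theorem~\ref{main3} carries the extra hypothesis $2s\lambda_1+2\lambda_3+(s+s^2)(\lambda_4+\lambda_5)<2$, which here becomes $s\lambda<1$; so the paper's reduction only yields $T$-stability for $\lambda\in[0,1/s)$, not for the full range $\lambda\in[0,1)$ announced in the corollary.

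You identified exactly this obstruction. Your first recursion $a_{n+1}\le s\lambda\,a_n+s\,b_n$ together with Lemma~\ref{useful} already recovers what the paper obtains (the case $s\lambda<1$). Your second step---fixing $N$ with $s\lambda^{N}<1$, proving by a one-line induction that $p(y_{n+j},T^{j}y_n)\to0$ for every fixed $j$, and then running the basic recursion for $T^{N}$ along each of the $N$ arithmetic-progression subsequences $(y_{kN+r})_k$---is precisely what is needed to handle $\lambda\in[1/s,1)$, and this idea is absent from the paper. So your route is not merely different: it closes a genuine gap in the paper's argument. Your reduction of claim~(a) to claim~(b) via $p(x_{n+1},Tx_n)=p(x_{n+1},x_{n+1})\le\lambda^{n+1}p(x_0,x_0)\to0$ is also a neat shortcut that the paper does not mention.
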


\begin{proof}
	Just notice that Corollary \ref{cor2} is a special case of Corollary \ref{cor1} where we take $\lambda_2=\lambda_3=\lambda_4=\lambda_5=0$. 
\end{proof}	
\begin{problem}
	The authors plan, in \cite{leko}, to study the $T$-stability of both the Kannan and the Chatterjea contractions for the Picard iteration for a self mapping defined on partial $b$-metric space.
\end{problem}

	The Corollary \eqref{BCPcor} illustrates the idea of the so-called \textit{$P$ property}. If a map $T$ satisfies $F (T ) = F (T^n )$ for each $n\in \mathbb{N}$, then it is said to
	have the $P$ property (see \cite{jeon}). The following results are generalizations of
	the corresponding results in partial $b$-metric spaces.

\begin{theorem}\label{theor2.8}
Let $(X,p)$ be a partial $b$-metric space with coefficient $s\geq 1$. Let $T:X\to X$ be a mapping such that $F(T) \neq \emptyset$ and that 

\begin{equation}\label{eq2.11}
	p(Tx,T^2) \leq \lambda p(x,Tx)
\end{equation}
for all $x\in X$, where $0\leq \lambda<1$ is a constant. Then $T$ has the $P$ property.
\end{theorem}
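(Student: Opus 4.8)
The plan is to verify the only nontrivial inclusion, namely $F(T^n)\subseteq F(T)$ for every $n\in\mathbb N$, since the reverse inclusion $F(T)\subseteq F(T^n)$ holds trivially for any self-map (if $Tx=x$ then $T^nx=x$). Thus it suffices to show that $T^nx=x$ implies $Tx=x$.

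First I would fix $n\in\mathbb N$ and take an arbitrary $x\in F(T^n)$, so that $T^nx=x$. The key device is to estimate $p(x,Tx)$ by telescoping the contractive hypothesis \eqref{eq2.11} along the orbit of $x$. Writing $p(x,Tx)=p(T^nx,T^{n+1}x)=p\big(T(T^{n-1}x),T^2(T^{n-1}x)\big)$ and applying \eqref{eq2.11} to the point $T^{n-1}x$ gives $p(x,Tx)\le \lambda\,p(T^{n-1}x,T^nx)$. Repeating this step $n$ times (always applying \eqref{eq2.11} to the appropriate iterate $T^{k}x$) and using $T^nx=x$ to close the loop yields $p(x,Tx)\le \lambda^n p(x,Tx)$.

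Since $0\le\lambda<1$ we have $\lambda^n<1$, so the inequality $p(x,Tx)\le\lambda^n p(x,Tx)$ forces $p(x,Tx)=0$. Then axioms (pm2) and (pm3) give $p(x,x)\le p(x,Tx)=0$ and $p(Tx,Tx)\le p(Tx,x)=p(x,Tx)=0$, hence $p(x,x)=p(x,Tx)=p(Tx,Tx)=0$; axiom (pm1) now yields $Tx=x$, i.e. $x\in F(T)$. Therefore $F(T^n)=F(T)$ for all $n\in\mathbb N$, which is precisely the $P$ property.

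There is essentially no serious obstacle in this argument; the only point requiring a little care is the direction of the telescoping, namely that \eqref{eq2.11} must be applied to the intermediate iterates $T^{k}x$ rather than to $x$ itself, and that the periodicity $T^nx=x$ is what allows the chain of inequalities to fold back onto $p(x,Tx)$. It is also worth noting that the hypothesis $F(T)\neq\emptyset$ serves only to make the assertion non-vacuous; the computation above does not actually use the existence of a fixed point.
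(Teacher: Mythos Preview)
Your proof is correct and follows essentially the same telescoping argument as the paper: write $p(x,Tx)=p(T^nx,T^{n+1}x)$ and apply \eqref{eq2.11} successively to $T^{n-1}x,\dots,T^0x$ to obtain $p(x,Tx)\le\lambda^n p(x,Tx)$, whence $p(x,Tx)=0$ and $Tx=x$. If anything, your final step is slightly cleaner than the paper's: the paper writes ``$\le\lambda^n p(z,Tz)\to 0$ as $n\to\infty$'', which is a little loose since $n$ is fixed by the choice of $z\in F(T^n)$, whereas you correctly conclude directly from $\lambda^n<1$; your explicit invocation of (pm1)--(pm3) to pass from $p(x,Tx)=0$ to $Tx=x$ is also a worthwhile addition.
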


\begin{proof}
	We always assume that $n > 1$, since the statement for $n = 1$ is trivial. Let $z \in F (T^n )$. It is clear that

	\begin{align*}
	p(z,Tz) \leq p(TT^{n-1}z,T^2T^{n-1}z) &\leq \lambda p(T^{n-1}z,T^nz)= \lambda p(TT^{n-2}z,T^2T^{n-2}z)\\
	& \leq \lambda^2 p(T^{n-2}z,T^{n-1}z)\leq \cdots \leq \lambda^n p(z,Tz) \to 0 \ \ (n\to \infty).
	\end{align*}
Hence, $p(z, T z) = 0$, that is., $T z = z$.	
	
\end{proof}

In concluding this section, we make a conjecture with respect to $P$ property with regards to Theorem \ref{main2} and Corollary \ref{cor1}. They are yet to be proved.

\begin{conjecture}
 	Under the conditions of Theorem \ref{main2}, $T$ has the $P$ property. For the proof, it is enough to check if the mapping $T$ satisfies \eqref{eq2.11}.
\end{conjecture}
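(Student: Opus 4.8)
The plan is to reduce the conjecture to Theorem \ref{theor2.8}. Under the hypotheses of Theorem \ref{main2} the set $F(T)$ is nonempty --- it is in fact the singleton $\{u\}$ --- so, exactly as the conjecture suggests, it is enough to check that $T$ satisfies the contractive condition \eqref{eq2.11}, i.e. that there is a constant $\lambda\in[0,1)$ with $p(Tx,T^{2}x)\le\lambda\,p(x,Tx)$ for every $x\in X$; once this is in hand, Theorem \ref{theor2.8} gives $F(T)=F(T^{n})$ for all $n\in\mathbb{N}$, which is the $P$ property. The assertion for Corollary \ref{cor1} is then the specialization $s=1$.

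To produce such a $\lambda$, I would fix an arbitrary $x\in X$ and substitute the pair $(x,Tx)$ for $(x,y)$ in \eqref{Chat-Kan}. If $p(x,Tx)=0$ then $x=Tx$, whence $p(x,x)=0$ and $p(Tx,T^{2}x)=p(x,x)=0$, so \eqref{eq2.11} is trivially satisfied; thus we may assume $p(x,Tx)>0$. The triple $(x,Tx,T^{2}x)$ now plays precisely the role of $(x_{n-1},x_{n},x_{n+1})$ in the proof of Theorem \ref{main2}, and the same estimates go through verbatim: bound $p(x,Tx)/(1+p(x,Tx))\le 1$; use (pm2) in the forms $p(x,x)\le p(x,Tx)$ and $p(Tx,Tx)\le p(x,Tx)$ together with $1+p(x,Tx)\ge p(x,Tx)$ to clear the denominators; and use (pm4) as $p(x,T^{2}x)\le s[p(x,Tx)+p(Tx,T^{2}x)]-p(Tx,Tx)$. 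This yields the analogue of \eqref{eq2.3},
\[
(1-\lambda_{2}-s\lambda_{3}-s\lambda_{4})\,p(Tx,T^{2}x)\le(\lambda_{1}+s\lambda_{3}+s\lambda_{4})\,p(x,Tx),
\]
and, after exploiting the symmetry $p(Tx,T^{2}x)=p(T^{2}x,Tx)$ and interchanging the two arguments in \eqref{Chat-Kan}, the analogue of \eqref{eq2.4},
\[
(1-\lambda_{2}-s\lambda_{3}-s\lambda_{5})\,p(Tx,T^{2}x)\le(\lambda_{1}+s\lambda_{3}+s\lambda_{5})\,p(x,Tx).
\]
Adding these two inequalities gives $(2-2\lambda_{2}-2s\lambda_{3}-s\lambda_{4}-s\lambda_{5})\,p(Tx,T^{2}x)\le(2\lambda_{1}+2s\lambda_{3}+s\lambda_{4}+s\lambda_{5})\,p(x,Tx)$. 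Since $\lambda_{1}+\lambda_{2}+2s\lambda_{3}+s\lambda_{4}+s\lambda_{5}<1$ forces $2-2\lambda_{2}-2s\lambda_{3}-s\lambda_{4}-s\lambda_{5}>0$, we may divide to obtain $p(Tx,T^{2}x)\le\lambda\,p(x,Tx)$ with
\[
\lambda:=\frac{2\lambda_{1}+2s\lambda_{3}+s\lambda_{4}+s\lambda_{5}}{2-2\lambda_{2}-2s\lambda_{3}-s\lambda_{4}-s\lambda_{5}}.
\]
Clearing denominators shows that $\lambda<1$ is equivalent to $\lambda_{1}+\lambda_{2}+2s\lambda_{3}+s\lambda_{4}+s\lambda_{5}<1$, which is exactly the standing hypothesis, while $\lambda\ge 0$ is immediate. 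Hence \eqref{eq2.11} holds on all of $X$, and Theorem \ref{theor2.8} completes the argument.

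I do not anticipate a genuine obstacle here: the whole computation is nothing more than the first half of the proof of Theorem \ref{main2} reread with $(x,Tx,T^{2}x)$ in place of $(x_{n-1},x_{n},x_{n+1})$. The only points that deserve a moment's care are the degenerate case $p(x,Tx)=0$, the positivity of the denominator $2-2\lambda_{2}-2s\lambda_{3}-s\lambda_{4}-s\lambda_{5}$, and the verification that the resulting constant $\lambda$ lies strictly below $1$; all three follow at once from $\lambda_{1}+\lambda_{2}+2s\lambda_{3}+s\lambda_{4}+s\lambda_{5}<1$.
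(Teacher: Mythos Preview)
The paper does not prove this statement; it is explicitly labeled a conjecture and the authors say it is ``yet to be proved,'' offering only the hint that it suffices to verify \eqref{eq2.11}. Your proposal takes exactly that hint and carries it out: the substitution $(x_{n-1},x_{n},x_{n+1})\mapsto(x,Tx,T^{2}x)$ turns the first half of the proof of Theorem~\ref{main2} into a verification of \eqref{eq2.11} with the very same constant
\[
\lambda=\frac{2\lambda_{1}+2s\lambda_{3}+s\lambda_{4}+s\lambda_{5}}{2-2\lambda_{2}-2s\lambda_{3}-s\lambda_{4}-s\lambda_{5}},
\]
and the standing hypothesis $\lambda_{1}+\lambda_{2}+2s\lambda_{3}+s\lambda_{4}+s\lambda_{5}<1$ is equivalent to $\lambda<1$. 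Since $F(T)\neq\emptyset$ by Theorem~\ref{main2}, Theorem~\ref{theor2.8} then applies and yields the $P$ property. Your handling of the degenerate case $p(x,Tx)=0$ (forcing $x=Tx=u$ and hence $p(Tx,T^{2}x)=p(u,u)=0$) and of the positivity of the denominator are both correct. In short, there is no paper proof to compare against; you have resolved the conjecture along precisely the line the authors anticipated.
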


Also 

\begin{conjecture}
Under the conditions of Corollary \ref{cor1}, $T$ has the $P$ property.
\end{conjecture}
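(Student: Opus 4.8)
The route hinted at by the preceding conjecture is the natural one: by Theorem~\ref{theor2.8} it is enough to produce a constant $0\le\lambda<1$ such that the orbital contraction $p(Tx,T^2x)\le\lambda\,p(x,Tx)$, i.e.\ inequality \eqref{eq2.11}, holds for every $x\in X$; the $P$ property then follows by applying Theorem~\ref{theor2.8} directly (recall that $F(T)\neq\emptyset$ by Corollary~\ref{cor1}).

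To obtain \eqref{eq2.11} I would specialize \eqref{Chat-Kan} with $s=1$ (the setting of Corollary~\ref{cor1}) to the pair $(x,y)=(x,Tx)$. Writing for brevity $a=p(x,Tx)$ and $b=p(Tx,T^2x)$, the five numerators on the right-hand side become products built from $a$, $b$, $p(x,T^2x)$ and $p(Tx,Tx)$, and each should be dominated by three elementary facts: $\tfrac{t}{1+t}<1$ for every $t\ge 0$; axiom (pm2), which gives $p(Tx,Tx)\le a$, hence $\tfrac{p(Tx,Tx)}{1+a}<1$ as well; and axiom (pm4) with $s=1$, that is $p(x,T^2x)\le a+b-p(Tx,Tx)\le a+b$. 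A short case analysis (using that $(a+b-d)d\le ab$ for $d\in[0,\min\{a,b\}]$) then shows that the $\lambda_2$-, $\lambda_3$- and $\lambda_5$-terms are each $\le b$, while the $\lambda_4$-term can only be bounded by $a+b$. Collecting, and noting that $1-\lambda_2-\lambda_3-\lambda_4-\lambda_5>\lambda_1\ge 0$ under the hypothesis of Corollary~\ref{cor1}, one is led to
\[
\bigl(1-\lambda_2-\lambda_3-\lambda_4-\lambda_5\bigr)\,b\ \le\ (\lambda_1+\lambda_4)\,a .
\]
Replaying the computation with the two arguments interchanged (legitimate since $b=p(Tx,T^2x)=p(T^2x,Tx)$) yields the same inequality with $\lambda_4$ replaced by $\lambda_5$, and averaging the two gives
\[
\bigl(1-\lambda_2-\lambda_3-\lambda_4-\lambda_5\bigr)\,b\ \le\ \Bigl(\lambda_1+\tfrac{\lambda_4+\lambda_5}{2}\Bigr)\,a ,
\]
which is exactly \eqref{eq2.11} with $\lambda=\dfrac{\lambda_1+\tfrac12(\lambda_4+\lambda_5)}{1-\lambda_2-\lambda_3-\lambda_4-\lambda_5}$.

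The crux — and, I believe, the reason the assertion is stated as a conjecture — is to guarantee that this $\lambda$ is $<1$. The displayed value is less than $1$ exactly when $\lambda_1+\lambda_2+\lambda_3+\tfrac32(\lambda_4+\lambda_5)<1$, which the plain condition $\sum_i\lambda_i<1$ of Corollary~\ref{cor1} does not imply in general (it does, however, when $\lambda_4=\lambda_5=0$, i.e.\ for Kannan-type maps, so the conjecture is at least true in that subcase). The whole obstruction is concentrated in the two fractions $\dfrac{p(x,Tx)\,p(x,Ty)}{1+p(x,y)}$ and its mirror image: with $y=Tx$ their numerators still carry the full factor $p(x,Tx)$, for which the only clean majorant appears to be $a+b$, and this is where the budget is lost. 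To close the conjecture one would need either a sharper estimate of these two terms — perhaps exploiting that along a periodic orbit $z,Tz,\dots,T^{n-1}z$ of $T^n$ the successive displacements are mutually comparable, so that the ``$a$'' contribution can be partially absorbed — or, failing that, the statement should be strengthened to carry an extra smallness hypothesis on $\lambda_4$ and $\lambda_5$, in the spirit of the additional condition already imposed in Theorem~\ref{main3}.
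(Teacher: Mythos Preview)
The paper itself offers \emph{no} proof of this statement: it is explicitly labelled a conjecture and the authors write that ``they are yet to be proved.'' So there is nothing to compare your proposal against; the relevant question is whether your analysis is sound and whether it settles the conjecture.

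Your computations are correct. Specializing \eqref{Chat-Kan} to $(x,Tx)$ and $(Tx,x)$ and averaging is exactly the device the paper uses in the proof of Theorem~\ref{main2} to obtain \eqref{eq2.3}--\eqref{eq2.4}, and your bounds on the five rational terms are clean (the inequality $(a+b-d)d\le ab$ for $0\le d\le\min\{a,b\}$ is just $(d-a)(d-b)\ge 0$). You arrive honestly at
\[
\lambda=\frac{\lambda_1+\tfrac12(\lambda_4+\lambda_5)}{1-\lambda_2-\lambda_3-\lambda_4-\lambda_5},
\]
and correctly observe that $\lambda<1$ is equivalent to $\lambda_1+\lambda_2+\lambda_3+\tfrac32(\lambda_4+\lambda_5)<1$, which is \emph{not} forced by the hypothesis $\sum_i\lambda_i<1$ of Corollary~\ref{cor1}. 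Your diagnosis that the loss is concentrated in the $\lambda_4$- and $\lambda_5$-terms, and your remark that the subcase $\lambda_4=\lambda_5=0$ goes through, are both accurate.

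In short: you have not proved the conjecture, but you do not pretend to have done so; you have carried the natural route (the one the paper itself suggests---``it is enough to check if the mapping $T$ satisfies \eqref{eq2.11}'') as far as it goes and pinpointed the genuine obstruction. That is the appropriate outcome for an open statement, and your suggestion that an additional smallness condition on $\lambda_4,\lambda_5$ (in the spirit of Theorem~\ref{main3}) would repair the argument is reasonable.
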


We conclude this paper by giving examples to illustrate Theorem \ref{main2}.

\begin{example}
Let $X= \{ 1,2,3,4\}$ and $p : X \times X \to \mathbb{R}$ be defined by
\[ 
p(x,y) = 
\begin{cases}
|x-y|^2+\max\{x,y\}, &\text{ if } x\neq y; \\
x, &\text{ if } x=y \neq 1;\\
0,&\text{ if } x=y=1.
\end{cases}
\]

Then $(X, p)$ is a complete partial $b$-metric space with coefficient $s = 4 > 1$.

Now define the self mapping $TX\to X$ by 

\[ T1=1; \ T2=1; \ T3=3; \ T4=2.\]

A simple computation gives:

\[
	\begin{cases}	
		p(T1,T2)= p(1,1) =0 &\leq \frac{3}{4}3 = \frac{3}{4} p(1,2)\\
		p(T1,T3)= p(1,2) =3 &\leq \frac{3}{4} 4 = \frac{3}{4} p(1,3)\\
		p(T1,T4)= p(1,2) =3 &\leq \frac{3}{4}13 = \frac{3}{4} p(1,4)\\

		p(T2,T3)= p(1,2) =3 &\leq \frac{3}{4} 4= \frac{3}{4} p(2,3)\\
		
		p(T2,T4)= p(1,2) =3 &\leq \frac{3}{4}8 = \frac{3}{4} p(2,4)\\
		
		p(T3,T4) = p(2,2) = 2 & \leq \frac{3}{4}5 = \frac{3}{4} p(3,4)\\
		
	\end{cases}
\]
Then, $T$ satisfies all the conditions of Theorem \ref{main2}, with $ \lambda_1 \in \left[\frac{3}{4},1\right), \lambda_2=\lambda_3=\lambda_4=\lambda_5=0 $ and
 obviously $\lambda_1+\lambda_2+2s\lambda_3+s\lambda_4+s\lambda_5 <1.$ Now, by
Theorem \ref{main2}, $T$ has a unique fixed point , which in this case is $1$.
\end{example}

\begin{example}
	Let $X = [0, 1], \ k>1$ and define a mapping $p: X \times X\to \mathbb{R}^+$ by $p(x,y)=   |x - y|^k$ for all $x, y \in X$. Then $(X, p)$ is a complete partial $b$-metric space with coefficient $s = 2^k > 1$. Define a mapping $T:X\to X$ by $Tx = e^{x-\lambda}$, where $\lambda> 1+ \ln 2$ is a constant. Then by mean
	value theorem of differentials, for any $x, y \in X$ and $x \neq y$, there exists some
	real number $\xi$ belonging to between $x$ and $y$ such that $$|e^{x-\lambda}-e^{y-\lambda}|^k = (e^{\xi-\lambda})^k|x-y|^k \leq (e^{1-\lambda})^k|x-y|^k .$$
	
	Hence
	
	\begin{align*}
		p(Tx,Ty) & = |e^{x-\lambda}-e^{y-\lambda}|^k
		\leq (e^{1-\lambda})^k|x-y|^k \leq (e^{1-\lambda})^kp(x,y)  .
	\end{align*}

Then, $T$ satisfies all the conditions of Theorem \ref{main2}, with $ \lambda_1 = (e^{1-\lambda})^k, \lambda_2=\lambda_3=\lambda_4=\lambda_5=0 $ and
obviously $\lambda_1+\lambda_2+2s\lambda_3+s\lambda_4+s\lambda_5 <1.$ Now, by
Theorem \ref{main2}, $T$ has a unique fixed point in $u\in X$.

In view of $\lambda> 1+ \ln 2$, then $\lambda_1 = (e^{1-\lambda})^k < 2^{1-p} = \frac{1}{s}$, so $2s\lambda_1 + 2\lambda_3 + (s +
s^2 )(\lambda_4 + \lambda_5 ) < 2$ and all the conditions of Theorem \ref{main3} are satisfied. So by Theorem \ref{main3}, the Picard's iteration is $T$-stable.

To see exactly what this $T$-stability means, consider the sequence $y_n = \frac{n}{n+1}u \in X$. It follows that

\[p(y_{n+1},Ty_n) = \left| \frac{n+1}{n+2}u - e^{\frac{n}{n+1}u - \lambda} \right| \to |u-e^{u-\lambda}| = 0 \ (n\to \infty). \]
Note that $y_n = \frac{n}{n+1}u\to u \ (n\to \infty).$

\end{example}

\section{Going further}
Recently,  Zheng et al.\cite{zeng} introduced the so-called $\theta$-$\phi$ contraction in complete metric spaces and this technique was successfully applied to Kannan type mapping in partial metric spaces (see \cite{hu}). The results
of the present paper will be applied in future investigations by the authors regarding $\theta$-$\phi$ contraction in complete partial $b$-metric spaces. Hence the continuation of this research is considering $\theta$-$\phi$-Chatterjea type contraction in partial $b$-metric spaces and investigate the existence of fixed points. We have a definition for $\theta$-$\phi$-Chatterjea type contraction
and we must verify that it follows the idea of Chatterjea contractions and generalizes them in a way that keeps their properties
and their relationship with other contractions.
Moreover, a natural question is to check whether this new type of contraction is $T$-stable and has the $P$ property.

\bibliographystyle{amsplain}

\end{document}